\tikzset{
    vertex/.style = {
        circle,
        draw,
        outer sep = 3pt,
        inner sep = 3pt,
    },edge/.style = {->,> = latex'}
}
\def\diag{\mathop{\rm diag}}
\def\rank{\mathop{\rm rank}}
\newcommand{\rr}{\mathbb{R}}
\newcommand{\1}{\mathbf{1}}
\newcommand{\0}{\mathbf{0}}
\def\det{{\rm det}}
\def\cir{{\rm Circ}}
\newcommand{\I}{({\rm I}_1)}
\newcommand{\J}{({\rm I}_2)}
\newcommand{\K}{({\rm I}_3)}
\newcommand{\KK}{({\rm I}_4)}
\newcommand{\KKK}{({\rm I}_5)}
\def\L{\widetilde{L}}
\def\D{\widetilde{D}}
\def\v{\widetilde{v}}
\newtheorem{theorem}{Theorem}
\newtheorem{lemma}{Lemma}
\newtheorem{definition}{Definition}
\newcommand{\al}{\alpha}
\begin{document}
\begin{center}
\begin{large}
On distance matrices of wheel graphs with odd number of vertices 
\end{large}
\end{center}
\begin{center}
R. Balaji, R.B. Bapat and Shivani Goel \\
(In memory of Miroslav Fiedler)\\
\today
\end{center}

\begin{abstract}
Let $W_n$ denote the wheel graph having $n$-vertices. 
If $i$ and $j$ are any two vertices of $W_n$, define 
\[d_{ij}:=
    \begin{cases}
    0 & \mbox{if~}i=j\\
    1 & \mbox{if~} i ~\mbox{and}~ j~ \mbox{are adjacent} \\
    2 & \mbox{else}.
    \end{cases}\]
   Let $D$ be the $n \times n$ matrix with $(i,j)^{\rm th}$ entry equal to $d_{ij}$. The matrix $D$ is called the distance matrix of $W_n$.
 Suppose $n \geq 5$ is an odd integer. In this paper, we
deduce a formula to compute the Moore-Penrose inverse of $D$. 
More precisely, we obtain an $n\times n$ matrix $\L$ and  a rank one matrix $ww'$ such that 
\[D^{\dag}= -\frac{1}{2} \L+\frac{4}{n-1}ww'.\]
Here, $\L$ is positive semidefinite, $\rank(\L)=n-2$ and all row sums are equal to zero. 

\end{abstract}

{\bf Keywords.} Wheel graphs, circulant matrices, Laplacian matrices, distance matrices, Moore-Penrose inverse.

{\bf AMS CLASSIFICATION.} 05C50

\section{Introduction}
Let $G$ be a connected graph with vertex set $V:=\{1,\dotsc,n\}$.  
Since $G$ is connected, any two vertices $i$ and $j$ in $V$ are now connected by a path in $G$. Let the 
minimum length of all such paths be denoted by $d_{ij}$.
 The distance matrix of $G$ is then the $n \times n$ symmetric matrix with $(i,j)^{\rm th}$ off-diagonal entry equal to $d_{ij}$ and all diagonal entries equal
 to zero. Distance matrices of connected graphs have several interesting properties and have applications in various fields like
 data communication, chemistry and biology. Distance matrices have a wide literature. For a comprehensive 
 introduction, we refer to the survey article  
\cite{must} and the monograph \cite{bapat} and \cite{fiedler_2011}.
There are several interesting problems on distance matrices. One of them is the following: If $G$ is a connected graph and $D$ is the distance matrix of $G$, deduce a formula to compute the determinant and the inverse of $D$. 
This problem originates from a well-known result of 
Graham and Lov\'asz \cite{Graham}. To introduce this result, we need to recall the notion of the Laplacian matrix of $G$.
Define $S:=\diag(s_1,\dotsc,s_n)$, where $s_i$ is the degree of
the vertex $i$ in $G$.
Suppose $A$ is the adjacency matrix
of $G$. Then
the matrix $M:=S-A$ is called the Laplacian matrix of $G$ with the following properties:
\begin{enumerate}
\item[(M1)] $M$ is positive semidefinite.
\item[(M2)] All row sums of $M$ are zero.
\item[(M3)] $\rank(M)=n-1$.
\end{enumerate}
Suppose $E$ is the distance matrix of a tree with $n$-vertices. According to Graham and Lov\'asz \cite{Graham},
\begin{equation} \label{gl}
E^{-1}=-\frac{1}{2}L + \frac{1}{2(n-1)}\tau \tau', 
\end{equation}
where $L$ is the Laplacian matrix of the tree and $\tau=(2-\delta_1,\dotsc,2-\delta_n)'$  with $\delta_i$ equal to the degree of the vertex $i$.
The remarkable feature of this formula is that the inverse can be expressed just by using the adjacency matrix and the vertex degrees of the tree.
A question that arises now naturally is how to generalize formula (\ref{gl}) to connected graphs other than trees.
In the case of trees, there is an elegant identity that connects the Laplacian with the distance matrix. If $E=[e_{ij}]$ and
$L^{\dag}=[\beta_{ij}]$, then 
\begin{equation} \label{ldid}
e_{ij}=\beta_{ii}+\beta_{jj}-2 \beta_{ij},
\end{equation}
where $L^{\dag}$ is the Moore-Penrose inverse of the Laplacian $L$.
All the known proofs for $(\ref{gl})$ rely on the relation $(\ref{ldid})$ either directly or indirectly and the properties (M1), (M2) and (M3) of the Laplacian.
If the connected graph is not a tree, then the identity (\ref{ldid}) does not hold and hence in general it is very difficult to get an elegant formula
similar to $(\ref{gl})$. 
However, for some special cases like weighted trees, complete graphs, complete bipartite graphs and wheel graphs with even number of vertices, there are formula 
in the spirit of $(\ref{gl})$: see \cite{Balaji, bapat_kirk, sivasu}.

Let $W_n$ be the wheel graph having $n$-vertices. In this paper, we assume $n$ is an odd integer.
Suppose $D$ is the distance matrix of $W_n$.
Define a vector $d := (0,1,-1,1,-1,\dotsc,-1)' \in \rr^n$. Then $Dd=\0$. So, $\det(D) = 0$. 
We now deduce a formula to compute the Moore-Penrose inverse of $D$ which is similar to 
(\ref{gl}). Precisely, we
obtain a matrix $\L$ and a rank one matrix $ww'$ such that 
\[D^{\dag}= -\frac{1}{2} \L+\frac{4}{n-1}ww',\]
where $\L$ is positive semidefinite, $\rank(\L)=n-2$ and all row sums are equal to zero. We also show that if $\L^{\dag}=[\theta_{ij}]$, then
\[d_{ij}=\theta_{ii}+\theta_{jj}-2 \theta_{ij} .\]

\section{Notation and conventions}
\begin{itemize}
\item The notation $n$ will {\it always} denote an odd positive integer which is at least $5$ and $W_n$ will stand for the wheel graph with
$n$ number of vertices.
The center of $W_n$ is labelled $w_1$. All vertices other than $w_1$  lie in a cycle of length $n-1$. We label these vertices by $w_2,w_3,\dotsc,w_{n-1}$ such that $(w_i,w_{i+1})$ is an edge. 
For example, see Figure \ref{fig_W7}.  
\begin{figure}
\centering
\begin{tikzpicture}[shorten >=1pt, auto, node distance=3cm, ultra thick,
   node_style/.style={circle,draw=black,fill=white !20!,font=\sffamily\Large\bfseries},
   edge_style/.style={draw=black, ultra thick}]
\node[vertex] (1) at  (0,0) {$w_1$};
\node[vertex] (2) at  (1.5,-2) {$w_2$};
\node[vertex] (3) at  (2.5,0) {$w_3$}; 
\node[vertex] (4) at  (1.5,2) {$w_4$};  
\node[vertex] (5) at  (-1.5,2) {$w_5$};  
\node[vertex] (6) at  (-2.5,0) {$w_6$};  
\node[vertex] (7) at  (-1.5,-2) {$w_7$};  
\draw  (1) to (2);
\draw  (1) to (3);
\draw  (1) to (4);
\draw  (1) to (5);
\draw  (1) to (6);
\draw  (1) to (7);
\draw  (2) to (3);
\draw  (3) to (4);
\draw  (4) to (5);
\draw  (5) to (6);
\draw  (6) to (7);
\draw  (7) to (2);
\end{tikzpicture}
\caption{Wheel graph $W_7$} \label{fig_W7}
\end{figure}
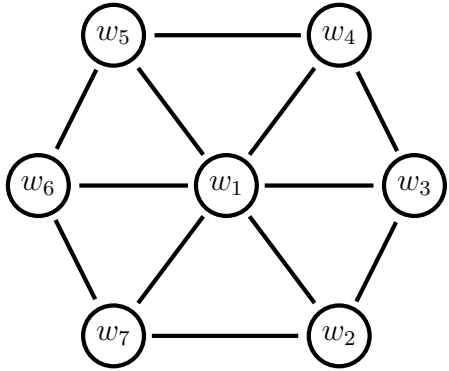
Since any other labelling of $W_n$ leads to a distance matrix which is permutation similar to $D$, without loss of generality, we fix this labelling.

\item All vectors are assumed to be column vectors unless stated otherwise. The identity matrix of order $n$ is denoted by $I$. If $k<n$, we use $I_k$ to denote the identity matrix of order $k$. 

\item We denote the vector of all ones in $\rr^{n-1}$ by $\1$ and the $(n-1) \times (n-1)$ matrix of all ones by $J$. If $\nu \neq n-1$, we use the notation $\1_\nu$ to denote the vector of all ones in $\rr^{\nu}$ and $J_\nu$ to denote the $\nu \times \nu$ matrix of all ones. As usual, we use $0$ to denote the scalar zero. To denote the zero vector (row/column), we use the notation $\0$. A matrix with more than one row/column and having all entries equal to zero is denoted by $O$.
If $(x_1,\dotsc,x_k)$ is a row vector, then $\cir(x_1,\dotsc,x_k)$ will be the circulant matrix with first row equal to $(x_1,\dotsc,x_k)$.

\item We reserve the letter $u$ to denote the row vector $(0,1,2,\dotsc,2,1)$ with $n-1$ components.
The distance matrix of $W_n$ now has the form
\begin{equation}\label{E_block_form_D}
    D = \left[ \begin{array}{cc}
        0 & \1' \\
        \1 & \widetilde{D}
    \end{array}\right],
\end{equation}
where $\widetilde{D}=\cir(u)$.  
We record the equation 
\begin{equation} \label{fd}
\D \1=2(n-3) \1
\end{equation}
for later use. 
An $n \times n$ matrix $A = [a_{ij}]$ is an Euclidean distance matrix if there exist $x^1,x^2,\dotsc,x^n \in \rr^r$ such that $a_{ij} = (x^i-x^j)'(x^i-x^j)$. By Theorem 12 in \cite{Jakli}, it follows that $D$ is an Euclidean distance matrix.

\item Let $x=(x_1,\dotsc,x_{n-1})$. We say that $x$ follows symmetry with respect to the $(\frac{n+1}{2})^{\rm th}$ coordinate in its last $n-2$ coordinates 
if $x$ has the form \[(x_1,x_2,\dotsc,x_\frac{n-1}{2},x_{\frac{n+1}{2}},x_\frac{n-1}{2},\dotsc,x_3,x_2),\] or
equivalently, $x$ satisfies the equations
\[x_i=x_{n+1-i}~~\mbox{for all}~ i=2,3,\dotsc,n-1. \]  
We define \[\Delta:=\{(x_1,\dotsc,x_{n-1}): x_i=x_{n+1-i}~\mbox{for all} ~i=2,3,\dotsc,n-1\}. \]

\item We fix $m$ to denote $\frac{n-1}{2}$. For each $k \in \{1,2,\dotsc,m\}$, define $ (c_{1}^{k},\dotsc,c_{n-1}^{k}) $ by
\begin{equation*}
    c^k_j := \begin{cases}
    1 & j=k+1~\mbox{or}~j=n-k \\ 
    0 &\mbox{otherwise}.
    \end{cases}
\end{equation*}
Let $c^k:=(c_1^k,\dotsc,c_{n-1}^{k})$ and
$C_k:=\cir(c^k)$. By an easy verification, 
\begin{equation} \label{ckd}
C_k \D=\cir(c^{k} \D).
\end{equation}
We  shall say that $c^1,\dotsc,c^m$ are special vectors for $W_n$ and $C_1, C_2,\dotsc,C_m$ are special matrices for $W_n$. Each $C_k$ is symmetric.
For $i \in \{1,2,\dotsc,n-1\}$, define
\begin{equation*}
    v_i = \begin{cases}
    ~~1&~\mbox{if}~i~\mbox{is odd}\\
    -1&~\mbox{if}~i~\mbox{is even}.
    \end{cases}
\end{equation*} 
 Let $v: = (v_1,v_2,\dotsc,v_{n-1})$ and $V:=\cir(v)$. 
If $k \in \{1,2,\dotsc,m-1\}$, then 
each column of $C_k$ has  exactly two ones and remaining entries equal to zero. Further, $k+1$ is odd if and only if $n-k$ is odd. On the other hand, each column of $C_m$ has exactly one entry equal to one and remaining entries equal to zero. Further, the first column of $C_m$ has one 
in the even position if and only if $m$ is even. Also, each $C_k$ is a Toeplitz matrix. In view of these observations, we get

\begin{equation} \label{vCk}
vC_k=\begin{cases}
        (-1)^{k} 2 v &~\mbox{if} ~k=1,\dotsc,m-1 \\
       (-1)^{m}v &~\mbox{if}~k=m.
        \end{cases}
   \end{equation}  
\end{itemize}

\section{Special Laplacian for $W_n$}
We now associate a special Laplacian $\L$ to $W_n$. This definition is motivated from numerical computations.
\begin{definition} \label{lap} \rm
 For each $k \in \{1,2,\dotsc,m\}$, define
 \[g(k):=\frac{n+(-1)^{(m-k)}}{2}\]
 and 
 \begin{equation} \label{al}
\al_k: = \frac{(-1)^{g(k)}(2m^2-6(m-k)^2+1)}{6(n-1)}.
\end{equation}
We say that the $n \times n$ matrix $\L$ defined by
\begin{equation*}
\begin{aligned}
    \L &:= \left[{\begin{array}{cc}
        \frac{n-1}{2} &  \0 \\
        \0 & O
    \end{array}}\right] + \frac{n(n-2)}{6(n-1)}\left[{\begin{array}{cc}
        0 & \0  \\
        \0 & I_{n-1}
    \end{array}}\right] - \frac{1}{2} \left[{\begin{array}{cc}
        0 & \1'  \\
        \1 & O
    \end{array}}\right] + \sum_{k=1}^{m} \al_k  \left[{\begin{array}{cc}
        0 & \0  \\
        \0 & C_k
    \end{array}}\right]
\end{aligned}
\end{equation*}
is the special Laplacian of $W_n$. 
 
\end{definition}

In the rest of the paper, we reserve the notation $\al_1,\dotsc,\al_m$ for the numbers obtained by substituting $k=1,\dotsc,m$ respectively
in the right hand side of the equation (\ref{al}).

\subsection{Illustration for $W_5$ and $W_7$} \label{sec3}
The interconnection between the special Laplacian and the distance matrix for $W_5$ and $W_7$ is given now. Later, in our main result, we  generalize the result mentioned here to a general $n$.
\begin{itemize}
\item Consider $W_5$.  The special vectors are now $c^1=(0,1,0,1)$ and $c^2=(0,0,1,0)$ and the special matrices 
are given by $\cir(c^1)$ and $\cir(c^2)$. We have $\al_1=\frac{1}{8}$ and $\al_2=-\frac{3}{8}$.
The special Laplacian for $W_5$ can now be written easily using the definition:
\begin{equation*}
\begin{aligned}
    \L = \frac{1}{8}\left[\begin{array}{ccccc}
        ~~16 & -4 & -4 & -4 & -4 \\
-4 & ~~5 & ~~1 & -3 & ~~1 \\
-4 & ~~1 & ~~5 & ~~1 & -3 \\
-4 & -3 & ~~1 & ~~5 & ~~1 \\
-4 & ~~1 & -3 & ~~1 & ~~5
\end{array}\right].
\end{aligned}
\end{equation*}
The distance matrix $D$ of $W_5$ is given by
\begin{equation*} 
      D=\left[
{\begin{array}{rrrrrrr}
0 & 1 & 1 & 1 & 1 \\
1 & 0 & 1 & 2 & 1 \\
1 & 1 & 0 & 1 & 2 \\
1 & 2 & 1 & 0 & 1 \\
1 & 1 & 2 & 1 & 0
\end{array}}
\right].
\end{equation*}
By setting $w:=\frac{1}{4}(0,1,1,1,1)'$, we note that 
\begin{equation} \label{w5}
-\frac{1}{2} \L+\frac{4}{n-1}ww'= \frac{1}{4}\left[\begin{array}{ccccc}
   -4 & ~~1 & ~~1 & ~~1 & ~~1 \\
~~1 & -1 & ~~0 & ~~1 & ~~0 \\
~~1 & ~~0 & -1 & ~~0 & ~~1 \\
~~1 & ~~1 & ~~0 & -1 & ~~0 \\
~~1 & ~~0 & ~~1 & ~~0 & -1
\end{array}\right].
\end{equation}
The Moore-Penrose inverse of $D$ and the matrix in the right hand side of $(\ref{w5})$ are equal. This can be verified directly.

\item Consider $W_7$. There are three special vectors now. These are given by 
\[c^1=(0,1,0,0,0,1),~~c^2=(0,0,1,0,1,0),~\mbox{and}~~c^3=(0,0,0,1,0,0). \] Using the special matrices $C_1=\cir(c^1)$, $C_2=\cir(c^2)$ and $C_3=\cir(c^3)$ and the numbers $\al_1=-\frac{5}{36}$, $\al_2=-\frac{13}{36}$ and $\al_3=\frac{19}{36}$, we compute the
special Laplacian for $W_7$:
\begin{equation*}
\begin{aligned}
    \L = \frac{1}{36}\left[\begin{array}{ccccccc}
        ~108 & -18 & -18 & -18 & -18 & -18 & -18 \\
-18 & ~~35 & -~5 & -13 & ~~19 & -13 & -~5 \\
-18 & -~5 & ~~35 & -~5 & -13 & ~~19 & -13 \\
-18 & -13 & -~5 & ~~35 & -~5 & -13 & ~~19 \\
-18 & ~~19 & -13 & -~5 & ~~35 & -~5 & -13 \\
-18 & -13 & ~~19 & -13 & -~5 & ~~35 & -~5 \\
-18 & -~5 & -13 & ~~19 & -13 & -~5 & ~~35
\end{array}\right].
\end{aligned}
\end{equation*}
The distance matrix $D$ of $W_7$ is given by
\begin{equation*}
      D=\left[
{\begin{array}{rrrrrrr}
0 & 1 & 1 & 1 & 1 & 1 & 1 \\
1 & 0 & 1 & 2 & 2 & 2 & 1 \\
1 & 1 & 0 & 1 & 2 & 2 & 2 \\
1 & 2 & 1 & 0 & 1 & 2 & 2 \\
1 & 2 & 2 & 1 & 0 & 1 & 2 \\
1 & 2 & 2 & 2 & 1 & 0 & 1 \\
1 & 1 & 2 & 2 & 2 & 1 & 0
\end{array}}
\right].
\end{equation*}
By setting $w:=\frac{1}{4}(-2,1,1,1,1,1,1)'$, we note that 
$$-\frac{1}{2} \L+\frac{4}{n-1}ww'= \frac{1}{18}\left[\begin{array}{ccccccc}
 -24 & ~~3 & ~~3 & ~~3 & ~~3 & ~~3 & ~~3 \\
~~3 & -8 & ~~2 & ~~4 & -4 & ~~4 & ~~2 \\
~~3 & ~~2 & -8 & ~~2 & ~~4 & -4 & ~~4 \\
~~3 & ~~4 & ~~2 & -8 & ~~2 & ~~4 & -4 \\
~~3 & -4 & ~~4 & ~~2 & -8 & ~~2 & ~~4 \\
~~3 & ~~4 & -4 & ~~4 & ~~2 & -8 & ~~2 \\
~~3 & ~~2 & ~~4 & -4 & ~~4 & ~~2 & -8
\end{array}\right].$$
The matrix on the right hand side of the above equation is the Moore-Penrose inverse of $D$.
\end{itemize}

\section{Main result}
We are now ready to state our main result.
The Moore-Penrose inverse of $D$ is given by
\begin{equation} \label{mpd}
D^{\dag} = -\frac{1}{2} \L+\frac{4}{n-1}ww',
\end{equation}
where $w:=\frac{1}{4}(5-n,1,\dotsc,1)'$. 
 Furthermore, $\L$ has the following properties:
\begin{enumerate}
\item[{\rm (i)}] $\L$ is positive semidefinite.
\item[{\rm (ii)}] $\L1=\0$. That is, all row/column sums of $\L$ are zero.
\item[{\rm (iii)}] $\rank(\L) = n-2.$
\end{enumerate}
In view of Section \ref{sec3}, the result is true for $W_5$ and $W_7$. We now proceed to show that the result holds for any odd integer $n$. In the rest of the paper, we assume $n \geq 9$.

\subsection{Some identities}
To prove the main result, we need the
following identities.
\begin{lemma}
Let $n$ be odd and $m=\frac{n-1}{2}$. Define $$g(k):=\frac{n+(-1)^{m-k}}{2}~~~k =1,2,\dotsc,m.$$
 Then the following are true.
\begin{enumerate}
\item[$\I$] $\sum_{k=1}^{m} (-1)^{g(k)} {(2m^2-6(m-k)^2+1)}=\begin{cases}
        -3m^2+3m  &~\mbox{if}~m~\mbox{is even} \\
        -m^2+3m+1&~\mbox{if}~m~\mbox{is odd.}
         \end{cases}$
\item[$\J$] $\sum_{k=1}^{m}  (-1)^{k+g(k)} {(2m^2-6(m-k)^2+1)} = -3m^2.$

\item[$\K$] $2 \sum_{k=1}^{m} \al_k - \al_m= \dfrac{6m-4m^2+1}{6(n-1)}$.

\item[$\KK$] If $j,j-1, j-2$ belong to $\{1,2\dotsc,m\}$, then $$2 \al_{j-1}+\al_j+\al_{j-2}=(-1)^{j} \frac{2}{n-1}.$$

\item[$\KKK$] \[2\sum_{k=1}^{m} (-1)^{k} \al_k - (-1)^{m} \al_m=\frac{2n-n^2}{6(n-1)}. \]

\end{enumerate}
\end{lemma}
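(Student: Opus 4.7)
The plan is to replace $(-1)^{g(k)}$ with a transparent alternating sign at the outset, after which all five identities collapse to routine computations with alternating sums of quadratics in $k$. Writing $n=2m+1$ gives $g(k)=m+\tfrac{1+(-1)^{m-k}}{2}$; since $\tfrac{1+(-1)^{m-k}}{2}$ is $1$ when $m-k$ is even and $0$ otherwise, the factor $(-1)^{(1+(-1)^{m-k})/2}$ equals $-(-1)^{m-k}$, and hence
\[
(-1)^{g(k)}\;=\;-(-1)^m(-1)^{m-k}\;=\;-(-1)^{2m-k}\;=\;(-1)^{k+1},
\]
the final equality because $2m-k\equiv k\pmod 2$. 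This single reduction drives the entire lemma.

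I would then dispatch $\J$ first: $(-1)^{k+g(k)}=(-1)^{2k+1}=-1$, so the left side equals $-\sum_{k=1}^m(2m^2-6(m-k)^2+1)$, and the standard identity $\sum_{j=0}^{m-1}j^2=(m-1)m(2m-1)/6$ collapses this to $-3m^2$. For $\I$, I would split the left side as $(2m^2+1)\sum_{k=1}^m(-1)^{k+1}-6\sum_{k=1}^m(-1)^{k+1}(m-k)^2$; the first factor is $0$ if $m$ is even and $1$ if $m$ is odd, while the substitution $j=m-k$ combined with the closed forms for $\sum(-1)^{k+1}k$ and $\sum(-1)^{k+1}k^2$ shows that the second sum equals $m(m-1)/2$ in both parities. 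The two resulting expressions are exactly the cases claimed in $\I$.

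The remaining identities are then corollaries. For $\K$, the quantity $2\sum\al_k-\al_m$ equals $[2\cdot\text{LHS of }\I - (-1)^{m+1}(2m^2+1)]/[6(n-1)]$; a short check shows that the parity-dependent piece of $\I$ cancels against this correction, yielding the parity-free $(6m-4m^2+1)/[6(n-1)]$. For $\KKK$, the identity $(-1)^k\al_k=-(2m^2-6(m-k)^2+1)/[6(n-1)]$ turns the sum into $-m^2/(n-1)$ via $\J$, and combining with $(-1)^m\al_m=-(2m^2+1)/[6(n-1)]$ and rewriting through $n=2m+1$ reproduces $(2n-n^2)/[6(n-1)]$. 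Finally $\KK$ is a local three-term identity: plugging in the formulas for $\al_{j-2},\al_{j-1},\al_j$, the $(2m^2+1)$ contributions cancel because $(-1)^{j-1}+2(-1)^j+(-1)^{j+1}=0$, and setting $a=m-j+1$ reduces the quadratic part to the elementary identity $(a-1)^2+(a+1)^2-2a^2=2$, producing exactly $2(-1)^j/(n-1)$.

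The only genuine obstacle is spotting the simplification $(-1)^{g(k)}=(-1)^{k+1}$; without it, the definition $g(k)=(n+(-1)^{m-k})/2$ forces a cumbersome case split on the parity of $m$ throughout, obscuring the structure. Once that reduction is in hand, the remaining work is routine alternating-sum manipulation, and $\I$ is the only identity whose conclusion genuinely splits by parity, because the alternating sum $\sum_{k=1}^m(-1)^{k+1}$ itself does.
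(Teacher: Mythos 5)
Your proposal is correct and follows essentially the same route as the paper: direct evaluation of the alternating sums using the standard closed forms, with $\J$, $\K$, $\KK$, $\KKK$ reduced to $\I$, $\J$ and a local three-term computation. The one improvement is your observation that $(-1)^{g(k)}=(-1)^{k+1}$ holds uniformly in $k$ regardless of the parity of $m$ — the paper derives exactly this sign pattern in each of its two cases (its equations \eqref{sign1} and \eqref{sign2} are literally identical) but never merges them, so your version removes the redundant case split while leaving the substance unchanged.
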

\begin{proof}
We begin with the proof of $\I$.    

{\it Case $1$}. Suppose $m$ is even. Then,  \[g(k)=
    \begin{cases}
    \frac{n+1}{2} & k ~\mbox{is even} \\
    \frac{n-1}{2} &k  ~\mbox{is odd}.\\
    \end{cases}\]

Since $m+1=\frac{n+1}{2}$ and $m$ is assumed to be even, we have 
\begin{equation} \label{sign1}
(-1)^{g(k)}=
    \begin{cases}
    -1 &k  ~\mbox{is even}\\
    ~~1 & k ~\mbox{is odd}. 
    \end{cases}
    \end{equation}
    
Therefore,
\begin{equation} \label{even1}
\sum_{k=1}^{m} (-1)^{g(k)}=0. 
\end{equation}

We now use the formula: If $p$ is even, then, \[1-2+3-4+5-\cdots-p=-\frac{p}{2}.\]

Applying this to (\ref{sign1}), we get 

\begin{equation} \label{even2}
\sum_{k=1}^{m} (-1)^{g(k)}k=-\frac{m}{2}. 
\end{equation}

If $p$ is even, then we know that \[1^2-2^2+3^2-\cdots-p^2=-\frac{p(p+1)}{2}.\]

By this formula, we deduce
\begin{equation} \label{even3}
\sum_{k=1}^{m}(-1)^{g(k)}k^2=-\frac{m(m+1)}{2}. 
\end{equation}

By $(\ref{even1}), (\ref{even2})$ and $(\ref{even3})$,
\begin{equation*}
\begin{aligned}
\sum_{k=1}^{m} (-1)^{g(k)}{(2m^2-6(m-k)^2+1)}&=-6\sum_{k=1}^{m} (-1)^{g(k)} (m-k)^2 \\
&=12m \sum_{k=1}^{m} (-1)^{g(k)}k-6 \sum_{k=1}^{m} (-1)^{g(k)}k^2 \\
&=12m (\frac{-m}{2})-6(\frac{-m(m+1)}{2}) \\
&=3m-3m^2.
\end{aligned}
\end{equation*}

{\it Case $2$}: Suppose $m$ is odd. 
Then,
\begin{equation} \label{sign2}
(-1)^{g(k)}=
    \begin{cases}
    ~~1 & k ~\mbox{is odd} \\
    -1 &k  ~\mbox{is even}.\\
    \end{cases}
    \end{equation}

Therefore,
\begin{equation} \label{odd1}
\sum_{k=1}^{m} (-1)^{g(k)}=1.
\end{equation}

If $p$ is odd, then, $$1-2+3-4-\cdots+p=\frac{p+1}{2}.$$ 

In view of this formula, we have 
\begin{equation} \label{odd2}
\sum_{k=1}^{m} (-1)^{g(k)} k=\frac{m+1}{2}.
\end{equation}

If $p$ is odd, then \[1-2^2+3^2-4^2+\cdots +p^2=\frac{p(p+1)}{2}.\] 

So,
\begin{equation} \label{odd3}
\sum_{k=1}^{m} (-1)^{g(k)} k^2=\frac{m(m+1)}{2}.
\end{equation}

By $(\ref{odd1}), (\ref{odd2})$, and $(\ref{odd3})$,
\begin{equation} \label{odd11}
\begin{aligned}
\sum_{k=1}^{m} (-1)^{g(k)} (m-k)^{2} &=m^2+\frac{m(m+1)}{2}-m(m+1) \\
&=\frac{m^2-m}{2}.\\
\end{aligned}
\end{equation}

Again by $(\ref{odd1})$, $(\ref{odd2})$ and $(\ref{odd3})$, and by $(\ref{odd11})$, we get
\begin{equation*}
\begin{aligned}
\sum_{k=1}^{m} (-1)^{g(k)}{(2m^2-6(m-k)^2+1)}&=2m^2-6(\frac{m^2-m}{2})+1\\
&=-m^2+3m+1.
\end{aligned}
\end{equation*}
This completes the proof of $\I$.

We now prove $\J$. If $m$ is even, then by (\ref{sign1}), 
$(-1)^{k+g(k)}=-1$ for any $k$. Similarly, if $m$ is odd, then by $(\ref{sign2})$, $ (-1)^{k+g(k)}=-1$ for any $k$.
Thus we have,
\begin{equation*}
    \begin{aligned}
       \sum_{k=1}^{m}  (-1)^{k+g(k)} {(2m^2-6(m-k)^2+1)} &= (-1)\sum_{k=1}^{m}  {(-4m^2-6k^2+12mk+1)} \\ 
       &=(4m^2-1) \sum_{k=1}^{m} 1+6 \sum_{k=1}^{m} k^2 -12m \sum_{k=1}^{m} k\\
        &= -3m^2.
    \end{aligned}
\end{equation*}
The proof of $\J$ is complete.

We now prove $\K$.  
Define $\delta:=2 \sum_{k=1}^{m}\al_k -\al_m$.
Suppose $m$ is even. Then by $\I$,
\[\sum_{k=1}^{m} \al_k=\frac{-3m^2+3m}{6(n-1)}. \] 
By definition,
\[\al_m=(-1)^{\frac{n+1}{2}}\frac{2m^2+1}{6(n-1)}. \]
Since $m=\frac{n-1}{2}$ and $m$ is even, $m+1=\frac{n+1}{2}$ is odd. 
Hence, 
\[\al_m=- \frac{2m^2+1}{6(n-1)}. \]
Now,
 \begin{equation*} \label{del1}
 \begin{aligned}
\delta &=\frac{1}{6(n-1)}(2(3m-3m^2)+2m^2+1) \\
&=\frac{6m-4m^2+1}{6(n-1)}.
\end{aligned}
\end{equation*}
If $m$ is odd, then by $\I$
\[\sum_{k=1}^{m} \al_k=\frac{-m^2+3m+1}{6(n-1)}. \] Also, by definition \[\al_m=\frac{2m^2+1}{6(n-1)}. \]
Now, 
\begin{equation*}\label{del2}
\begin{aligned}
\delta  &= \frac{1}{6(n-1)}
        2(-m^2+3m+1)-(2m^2+1)  \\
        &=\frac{6m-4m^2+1}{6(n-1)}. \\
        \end{aligned}
\end{equation*}
The proof of $\K$ is complete.

\noindent We now prove $\KK$. 
In view of (\ref{al}), 
 \[\al_{j-1}=\frac{(-1)^{\frac{n+(-1)^{(m-(j-1))}}{2}}(2m^2-6(m-(j-1))^2+1)}{6(n-1)},\]
 $$\al_j=\frac{(-1)^{\frac{n+(-1)^{(m-j)}}{2}}(2m^2-6(m-j)^2+1)}{6(n-1)},$$ \[\al_{j-2}=\frac{(-1)^{\frac{n+(-1)^{(m-(j-2))}}{2}}(2m^2-6(m-(j-2))^2+1)}{6(n-1)}.\]
We note that
\begin{equation}\label{aj} 
\begin{aligned}
\al_{j} + \al_{j-2}&= \frac{(-1)^{\frac{n+(-1)^{(m-j)}}{2}}(-8m^2-12j^2+24mj+24j-24m-22)}{6(n-1)} \\ &=  \frac{(-1)^{j-1}(8m^2+12j^2-24mj-24j+24m+22)}{6(n-1)} .
\end{aligned}
\end{equation}
Also,
\begin{equation}\label{aj-2}
\begin{aligned}
2\al_{j-1}&= \frac{(-1)^{\frac{n-(-1)^{(m-j)}}{2}}(-8m^2-12j^2+24j+24mj-24m-10)}{6(n-1)} \\ &= \frac{(-1)^{j}(8m^2+12j^2-24mj-24j+24m+10)}{6(n-1)}. 
\end{aligned}
\end{equation}
Adding equations (\ref{aj}) and (\ref{aj-2}), we get
\begin{equation*} \label{pp}
    \begin{aligned}
    2\al_{j-1}+\al_{j} + \al_{j-2} &= (-1)^j \frac{2}{n-1}.
    \end{aligned}
\end{equation*}
This proves $\KK$. The proof of $\KKK$ is direct from $\J$. 
This completes the proof.
\end{proof}


\section{Computation of $\L D$}
To prove the inverse formula, it is useful to compute $\L D$ precisely. 
\begin{lemma} \label{M}
\[\L D=
\left[\begin{array}{cccc}
\dfrac{1-n}{2} & \dfrac{5-n}{2} \1'  \\
        \\
        \dfrac{1}{2} \1 & M
\end{array}\right],\]
where
\[M:=\cir (\frac{n(n-2)}{6(n-1)}u-\frac{1}{2}\1'+\sum_{k=1}^{m} \al_k c^{k} \D).\]
\end{lemma}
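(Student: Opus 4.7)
The plan is to treat this as a direct block-matrix computation, writing both $\widetilde{L}$ and $D$ in the conformal $1+(n-1)$ partition and then matching the four blocks of the product against the claimed expression. Rewriting the definition of $\widetilde{L}$, I would combine the four summands in Definition \ref{lap} into the single block form
\[
\widetilde{L}=\begin{pmatrix} \frac{n-1}{2} & -\frac{1}{2}\mathbf{1}' \\ -\frac{1}{2}\mathbf{1} & \frac{n(n-2)}{6(n-1)} I_{n-1}+\sum_{k=1}^{m}\alpha_k C_k \end{pmatrix},
\]
and then multiply this against the given block form of $D$ from (\ref{E_block_form_D}).

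The $(1,1)$ and $(1,2)$ blocks are immediate: the first is just $-\tfrac12 \mathbf{1}'\mathbf{1}=\tfrac{1-n}{2}$, and the second uses (\ref{fd}) to give $\frac{n-1}{2}\mathbf{1}'-\tfrac12\mathbf{1}'\widetilde{D}=\tfrac{n-1}{2}\mathbf{1}'-(n-3)\mathbf{1}'=\tfrac{5-n}{2}\mathbf{1}'$. For the $(2,2)$ block I would compute $-\tfrac12 J + \frac{n(n-2)}{6(n-1)}\widetilde{D}+\sum_{k=1}^m \alpha_k C_k\widetilde{D}$, apply (\ref{ckd}) to pull every term inside a circulant, and note that $-\tfrac12 J=\operatorname{Circ}(-\tfrac12\mathbf{1}')$ and $\widetilde{D}=\operatorname{Circ}(u)$; linearity of $\operatorname{Circ}$ in the first row then reassembles this exactly into $M$ as defined.

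The real work is in the $(2,1)$ block, which is where all the identities must come in. This block equals $\bigl(\frac{n(n-2)}{6(n-1)}+c\bigr)\mathbf{1}$, where $c$ is the common row sum of $\sum_k\alpha_k C_k$. Since $C_k\mathbf{1}=2\mathbf{1}$ for $k=1,\dots,m-1$ and $C_m\mathbf{1}=\mathbf{1}$, we have $c=2\sum_{k=1}^m\alpha_k-\alpha_m$, which by identity $(\text{I}_3)$ equals $\frac{6m-4m^2+1}{6(n-1)}$. Substituting $m=\frac{n-1}{2}$ so that $4m^2=(n-1)^2$ and $6m=3(n-1)$, I expect the numerator of the coefficient to collapse to $3(n-1)$, producing exactly $\tfrac12\mathbf{1}$ as required.

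The only non-routine step is this last collapse: correctly bookkeeping the splitting $C_m\mathbf{1}=\mathbf{1}$ vs.\ $C_k\mathbf{1}=2\mathbf{1}$ for $k<m$, which forces the appearance of the $-\alpha_m$ correction in identity $(\text{I}_3)$ rather than $2\sum_k\alpha_k$ alone. Everything else is linearity, block multiplication, and use of (\ref{fd}) and (\ref{ckd}).
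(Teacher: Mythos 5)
Your proposal is correct and follows essentially the same route as the paper: the same conformal block multiplication, the same use of (\ref{fd}) for the $(1,2)$ block and of (\ref{ckd}) plus linearity of $\cir$ for the $(2,2)$ block, and the same splitting $C_k\1=2\1$ for $k<m$ versus $C_m\1=\1$ feeding into identity $\K$ and the collapse $n(n-2)+6m-4m^2+1=3(n-1)$ for the $(2,1)$ block. No gaps.
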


\begin{proof}
Direct multiplication of $\L$ and $D$ gives
\[\L D=
\left[\begin{array}{cccc}
\dfrac{1-n}{2} & A\\
\\
B & M
\end{array}\right],\]
where 
\[A= \frac{n-1}{2} \1'-\frac{1}{2}\1'\D,\]
\[B= \frac{n(n-2)}{6(n-1)}\1+\sum_{k=1}^{m}  \al_k C_k \1,\]
\[M=\frac{n(n-2)}{6(n-1)}\D-\frac{1}{2}\1 \1'+\sum_{k=1}^{m} \al_k  C_k\widetilde{D}.\]
We now simplify $A$, $B$ and $M$.
Since $\D=\cir (u )$, $\1 \1'=\cir(\1')$ and $C_k\D=\cir({c^k}\D)$, we get
\[M=\cir (\frac{n(n-2)}{6(n-1)}u-\frac{1}{2}\1'+\sum_{k=1}^{m} \al_k {c^{k}} \D).\]
To complete the proof, we need to show that 
\[A=\frac{5-n}{2} \1'~~\mbox{and}~~B=\frac{1}{2} \1. \]
By (\ref{fd}), 
     $\D \1 = {2}{(n-3)} \1$.
     So, $A=\frac{n-1}{2} \1'-(n-3) \1' $.
This gives
\begin{equation*} \label{E_A}
A=\frac{5-n}{2}\1'.
\end{equation*}
To simplify $B$, we make the following observation first. If $1 \leq k \leq m-1$, then the the first row of $C_k$ has  exactly two ones and remaining entries equal to zero. 
On the other hand, the first row of $C_m$ has  exactly one entry equal to one and remaining entries equal to zero. 
Using this observation together with the fact that  $C_k$ is circulant, we now get
\[C_k \1=
    \begin{cases}
    2 \1 & \mbox{if~}k=1,\dotsc,m-1\\
    \1 & \mbox{if~}k=m.
    \end{cases}\]
So,
\begin{equation} \label{B1}
\begin{aligned}
B&=\frac{n(n-2)}{6(n-1)}\1+2\sum_{k=1}^{m-1}  \al_k \1+\al_m \1 \\
&=\frac{n(n-2)}{6(n-1)}\1 + 2 \sum_{k=1}^{m} \al_k \1-2 \al_m \1 + \al_m \1 \\
&=\frac{n(n-2)}{6(n-1)}\1+2 \sum_{k=1}^{m} \al_k \1 -\al_m \1.\\
\end{aligned}
\end{equation} 
Let $$\delta=2 \sum_{k=1}^{m} \al_k-\al_m.$$
 Then by $\K$, 
 \[ \delta =\frac{6m-4m^2+1}{6(n-1)}.\]
Hence ($\ref{B1}$) reduces to
\[
B=\frac{n(n-2)+6m-4m^2+1}{6(n-1)} \1.
 \]
Since $m=\frac{n-1}{2}$, \[n(n-2)-4m^2+6m+1=3n-3. \]
 So, \[ B=\frac{1}{2} \1.\]
The proof is complete now. 
\end{proof}

\subsection{The vectors $c^k\D$}
We now compute the vectors $c^1\D,\dotsc,c^m\D$ which appear in the matrix $M$. 
\begin{lemma} \label{first_row}
$c^{1} \widetilde{D}=({2,2},3,\underbrace{4,\dotsc,4}_{n-6},3,2)$
and $c^{1}\D \in \Delta$.
\end{lemma}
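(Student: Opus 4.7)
The plan is to exploit the sparsity of $c^1$ together with the circulant structure of $\D$. Since $c^1=(0,1,0,\dots,0,1)$ is supported on coordinates $2$ and $n-1$, the row vector $c^1\D$ equals the sum of the second and the $(n-1)$-st rows of $\D$. Because $\D=\cir(u)$ with $u=(0,1,2,\dots,2,1)$, these two rows are the cyclic shifts of $u$ by one and by $n-2$ positions respectively, namely $(1,0,1,2,\dots,2)$ and $(1,2,\dots,2,1,0)$.

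Adding these two rows entrywise gives the claimed expression: the first coordinate is $1+1=2$; the second is $0+2=2$; the third is $1+2=3$; coordinates $4$ through $n-3$ are each $2+2=4$, forming a block of length $n-6$; and by mirror-symmetry the last two coordinates become $3$ and $2$. This yields exactly $(2,2,3,\underbrace{4,\dots,4}_{n-6},3,2)$, as asserted.

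For the second claim, that $c^1\D\in\Delta$, one reads the required symmetry directly off the explicit vector: writing $x=c^1\D$, we have $x_2=x_{n-1}=2$ and $x_3=x_{n-2}=3$, while the constant block of $4$'s in positions $4,\dots,n-3$ is trivially invariant under the reflection $i\mapsto n+1-i$. No step presents any real obstacle; the entire lemma is a brief bookkeeping argument made possible by the fact that $c^1$ has only two nonzero entries, which themselves sit in positions related by the reflection generating $\Delta$.
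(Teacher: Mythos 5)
Your proof is correct and follows essentially the same route as the paper: identify $c^1\widetilde{D}$ as the sum of the second and last rows of $\widetilde{D}$, write those rows out explicitly as $(1,0,1,2,\dots,2)$ and $(1,2,\dots,2,1,0)$, add entrywise, and check the reflection symmetry $x_i=x_{n+1-i}$ directly from the resulting vector. Nothing is missing.
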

\begin{proof}
We first note that \[c^1=(0,1,0,\dotsc,0,1).\] So, $c^{1}\widetilde{D}$ is the sum of the second row and the last
row of $\widetilde{D}$. Let $x$ be the second row and $y$ be the last row of $\widetilde{D}$. Then,
\[x=(1,0,1,2,\dotsc,2)~~\mbox{and}~~y=(1,2,\dotsc,2,1,0).\]
Now, $c^1\D=x+y=({2,2},3,\underbrace{4,\dotsc,4}_{n-6},3,2)$. 
To verify $c^1\D \in \Delta$ is direct.
This completes the proof.
\end{proof}

\begin{lemma} \label{n-1/2_row}
\[c^m\widetilde{D}=(\underbrace{2,\dotsc,2}_{\frac{n-3}{2}},1,0,1,2,\dotsc,2), \]
and $c^m\D \in \Delta$.
\end{lemma}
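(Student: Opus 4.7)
The plan is to mimic the proof of Lemma~\ref{first_row}, but using the special feature of the index $k=m$. Specifically, since $m=\frac{n-1}{2}$, the two candidate positions $j=k+1$ and $j=n-k$ in the definition of $c^m_j$ \emph{coincide}, both equalling $\frac{n+1}{2}$. Thus $c^m$ is the row vector with a single $1$ at position $\frac{n+1}{2}$ and zeros elsewhere. (This contrasts with $c^1,\dots,c^{m-1}$, each of which has two ones.) Consequently $c^m\widetilde{D}$ is simply the $\frac{n+1}{2}$-th row of $\widetilde{D}$, and everything reduces to reading off that row explicitly.

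Since $\widetilde{D}=\cir(u)$ with $u=(0,1,2,\dots,2,1)$ (length $n-1$), the $i$-th row is obtained from $u$ by a cyclic right-shift of $i-1$ positions. Taking $i=\frac{n+1}{2}=m+1$, the shift is by $m$ positions, which moves the unique zero of $u$ from position $1$ to position $m+1=\frac{n+1}{2}$, and moves the two ones of $u$ (originally at positions $2$ and $n-1$) to positions $\frac{n-1}{2}$ and $\frac{n+3}{2}$ respectively. All the other entries of $u$ are $2$, so the remaining positions of $c^m\widetilde{D}$ contain $2$. Collecting these together gives exactly
\[
c^m\widetilde{D}=(\underbrace{2,\dots,2}_{\frac{n-3}{2}},\,1,\,0,\,1,\,2,\dots,2),
\]
which is the claim. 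A quick length check ($\frac{n-3}{2}+3+\frac{n-5}{2}=n-1$) confirms the bookkeeping is right.

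For membership in $\Delta$, one simply notes that the displayed vector is palindromic about the central coordinate $\frac{n+1}{2}$: the two $1$'s sit symmetrically at positions $\frac{n-1}{2}$ and $\frac{n+3}{2}$, and the blocks of $2$'s on either side have lengths $\frac{n-3}{2}$ and $\frac{n-5}{2}$ but, combined with the entry at position $2$ and the wraparound at position $n-1$, produce the required equalities $x_i=x_{n+1-i}$ for $i=2,\dots,n-1$. This can be verified by a direct index comparison. There is no real obstacle here; the only subtlety is noticing at the outset that the $k=m$ case differs from $k<m$ in that $c^m$ has only one nonzero entry, which is why the resulting row has a single $0$ rather than being the sum of two rows as in Lemma~\ref{first_row}.
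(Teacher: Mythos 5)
Your proof is correct and takes essentially the same route as the paper: both reduce the claim to reading off the single row of $\widetilde{D}$ indexed by $\frac{n+1}{2}$, since $c^m$ has exactly one nonzero entry (the positions $k+1$ and $n-k$ coincide when $k=m$). The only cosmetic difference is that you extract that row from the circulant structure $\widetilde{D}=\cir(u)$ by a cyclic shift, whereas the paper reads it off from the adjacency pattern of the vertex $w_{\frac{n+3}{2}}$ in $W_n$; both computations are immediate and yield the same vector and the same symmetry check for membership in $\Delta$.
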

\begin{proof}
We write $c^m$:
\[c^m=(\underbrace{0,\dotsc,0}_{\frac{n-1}{2}},1,{0,\dotsc,0}).\] Put $j=\frac{n+1}{2}$.
Then, $c^m\widetilde{D}$ is the $j^{\rm th}$ row of $\widetilde{D}$. 
This means that if $(r_1,\dotsc,r_n)$ is the $(j+1)^{\rm th}$  row of $D$, then 
$c^m\D=(r_{2},\dotsc,r_n)$. The vertex $w_{j+1}$ in $W_n$ is adjacent to $w_1$, $w_j$ and $w_{j+2}$.  Thus, 
\[r_\nu=
    \begin{cases}
    0 & \mbox{if~}\nu=j+1 \\
    1 & \mbox{if~}\nu=1,j, j+2\\
    2 & \mbox{otherwise.}
    \end{cases}\]
If $(\theta_{1},\dotsc,\theta_{n-1})=(r_{2},\dotsc,r_n)$, then the above equation gives
\[\theta_{i}=
    \begin{cases}
    0 & \mbox{if~}i=j \\
    1 & \mbox{if~}i=j-1, j+1\\
    2 & \mbox{otherwise.}
    \end{cases}\]
As $j=\frac{n+1}{2}$ and $c_m\D=(\theta_1,\dotsc,\theta_{n-1})$, we conclude that
\[c^m\widetilde{D}=(\underbrace{2,\dotsc,2}_{\frac{n-3}{2}},1,0,1,2,\dotsc,2).\]
Again, $c^m\D \in \Delta$ is direct. This completes the proof.
\end{proof}

\begin{lemma} \label{n-3/2_row}
\[c^{m-1}\widetilde{D}=(\underbrace{4,\dotsc,4}_{\frac{n-5}{2}},3,2,2,2,3,\underbrace{4,\dotsc,4}_{\frac{n-7}{2}}), \]
and $c^{m-1}\D \in \Delta$.
\end{lemma}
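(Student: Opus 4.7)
The plan is to mimic the strategy of Lemmas \ref{first_row} and \ref{n-1/2_row}: identify the support of $c^{m-1}$, read off the corresponding rows of $\widetilde{D}$, and add them entrywise. By definition, $c^{m-1}_j$ is nonzero precisely when $j = (m-1)+1 = m$ or $j = n-(m-1) = m+2$ (using $n = 2m+1$), so $c^{m-1}\widetilde{D}$ is the sum of the $m$-th and $(m+2)$-th rows of $\widetilde{D}$.

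Next I would read off these two rows. Because entries of $\widetilde{D}$ are $0$, $1$, or $2$ according to cyclic rim-adjacency, the $m$-th row of $\widetilde{D}$ has a $0$ in column $m$, a $1$ in columns $m-1$ and $m+1$, and a $2$ in every other column; similarly, the $(m+2)$-th row has a $0$ in column $m+2$, a $1$ in columns $m+1$ and $m+3$, and a $2$ elsewhere. The standing hypothesis $n \geq 9$ ensures that the five indices $m-1, m, m+1, m+2, m+3$ are distinct elements of $\{1,\dotsc,n-1\}$, so there is no wrap-around or collapsing of cases.

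Summing the two rows entrywise, at every column outside $\{m-1,m,m+1,m+2,m+3\}$ the sum is $2+2 = 4$. At column $m-1$ the sum is $1+2=3$; at $m$, $0+2=2$; at $m+1$, $1+1=2$ (this is the overlap, where both rows contribute a cycle-neighbor $1$); at $m+2$, $2+0=2$; at $m+3$, $2+1=3$. The run of $4$s before column $m-1$ has length $(m-1)-1 = m-2 = \frac{n-5}{2}$, and the run after column $m+3$ has length $(n-1)-(m+3) = m-3 = \frac{n-7}{2}$. This produces exactly the displayed vector
\[
c^{m-1}\widetilde{D} = (\underbrace{4,\dotsc,4}_{\frac{n-5}{2}},3,2,2,2,3,\underbrace{4,\dotsc,4}_{\frac{n-7}{2}}).
\]
Membership $c^{m-1}\widetilde{D} \in \Delta$ is then immediate by inspection, since the listed entries are palindromic about the coordinate $m+1 = \frac{n+1}{2}$.

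The main obstacle is nothing substantive; this is purely bookkeeping of a circulant. The only point of care is the index arithmetic, namely verifying that the affected positions $m-1,\dotsc,m+3$ are all distinct and lie in the valid range (guaranteed by $n \geq 9$) and that the two runs of $4$s have the lengths $\frac{n-5}{2}$ and $\frac{n-7}{2}$ asserted in the statement.
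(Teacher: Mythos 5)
Your proposal is correct and follows essentially the same route as the paper: identify that $c^{m-1}$ is supported at positions $m$ and $m+2$, write down those two rows of $\widetilde{D}$ (the paper extracts them from the corresponding rows of $D$, you read them off the circulant directly, which is the same computation), add them entrywise, and check the index counts and the symmetry $x_i = x_{n+1-i}$ for membership in $\Delta$. The bookkeeping (values $2,3,4$ at the right places, run lengths $\frac{n-5}{2}$ and $\frac{n-7}{2}$, no wrap-around since $n \geq 9$) all checks out.
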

\begin{proof}
Since,
\[c^{m-1}=(\underbrace{0,\dotsc,0}_{\frac{n-3}{2}},1,0,1,\underbrace{0,\dotsc,0}_{\frac{n-5}{2}}),\]
$c^{m-1}\D$ is the sum of $(\frac{n-1}{2})^{\rm th}$ and $(\frac{n+3}{2})^{\rm th}$ rows of $\widetilde{D}$. Let these two rows be $\theta:=(\theta_1,\dotsc,\theta_{n-1})$ and $\rho:=(\rho_{1},\dotsc,\rho_{n-1})$
respectively.  

Suppose $(s_1,\dotsc,s_n)$ is the $(\frac{n+1}{2})^{\rm th}$ row of $D$. Then, $(s_2,\dotsc,s_n)$ is the $(\frac{n-1}{2})^{\rm th}$ row of $\D$.
The vertex $w_{\frac{n+1}{2}}$ is adjacent to $w_1$, $w_{\frac{n+3}{2}}$ and $w_{\frac{n-1}{2}}$ in $W_n$. Thus,
\[s_{\nu}=
    \begin{cases}
    0 & \mbox{if~}\nu=\frac{n+1}{2}\\
    1 & \mbox{if~}\nu=1,\frac{n+3}{2},\frac{n-1}{2} \\
    2 & \mbox{otherwise.}
    \end{cases}\]
As $(\theta_1,\dotsc,\theta_{n-1})=(s_2,\dotsc,s_n)$,
\begin{equation} \label{th}
\theta_{i}=
    \begin{cases}
    0 & \mbox{if~}i=\frac{n-1}{2}\\
    1 & \mbox{if~}i=\frac{n+1}{2},\frac{n-3}{2} \\
    2 & \mbox{otherwise.}
    \end{cases}
    \end{equation}
Suppose $(t_{1},\dotsc,t_n)$ is the $(\frac{n+5}{2})^{\rm th}$ row of $D$. Then, $(t_{2},\dotsc,t_n)$ is the $(\frac{n+3}{2})^{\rm th}$ row of $\D$.
The vertex $w_\frac{n+5}{2}$ is adjacent to $w_1$, $w_\frac{n+3}{2}$ and $w_\frac{n+7}{2}$.
Thus,
\[t_{\nu}=
    \begin{cases}
    0 & \mbox{if~}\nu=\frac{n+5}{2}\\
    1 & \mbox{if~}\nu=1,\frac{n+3}{2},\frac{n+7}{2} \\
    2 & \mbox{otherwise.}
    \end{cases}\]
As $(\rho_1,\dotsc,\rho_{n-1})=(t_2,\dotsc,t_n)$,
\begin{equation} \label{rho}
\rho_{i}=
    \begin{cases}
    0 & \mbox{if~}i=\frac{n+3}{2}\\
    1 & \mbox{if~}i=\frac{n+1}{2},\frac{n+5}{2} \\
    2 & \mbox{otherwise.}
    \end{cases}
    \end{equation}
 By $(\ref{th})$ and $(\ref{rho})$, \[(c^{m-1}\D)_{i}=(\theta+\rho)_{i}= \begin{cases}
2& \mbox{for}~i={\frac{n-1}{2}},{\frac{n+1}{2}}, \frac{n+3}{2}\\
3&\mbox{for}~i={\frac{n-3}{2}},{\frac{n+5}{2}}\\
4&\mbox{otherwise}.
\end{cases}\]
We now show that $c^{m-1}\D \in \Delta$. 
Define $$\Omega_{1}:=\{\frac{n-1}{2},\frac{n+1}{2},\frac{n+3}{2}\},$$
$$\Omega_{2}:=\{\frac{n-3}{2},\frac{n-5}{2} \},$$
\[\Omega_3:=\{2,\dotsc,n-1\} \smallsetminus (\Omega_1 \cup \Omega_2).\]
It is easy to see that, for each $j=1,2,3$,
\[\nu \in \Omega_j~~\iff ~~n+1-\nu \in \Omega_{j},\]
and hence $c^{m-1}\D \in \Delta$.
The proof is complete.
\end{proof}

\begin{lemma}\label{1<k}
Let $1<k<m-1$. Define $q^k:={c^k}\widetilde{D}$.  If $q^k=(q_{1}^k,\dotsc,q_{n-1}^k)$, then
\[q_j^k=
    \begin{cases}
    2 & \mbox{if~}j=k+1,n-k \\
    3 & \mbox{if~}j=k,k+2,n-k-1,n-k+1\\
    4 & \mbox{otherwise.}
    \end{cases}\]
Furthermore, each $q^k \in \Delta$.
\end{lemma}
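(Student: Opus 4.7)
The plan is to imitate the strategies used in Lemmas~2, 3, and 4. Since $c^k$ has exactly two nonzero entries, both equal to $1$, located in positions $k+1$ and $n-k$, the vector $q^k = c^k \widetilde{D}$ is simply the sum of the $(k+1)^{\rm th}$ and $(n-k)^{\rm th}$ rows of $\widetilde{D}$. I would first write down each of these two rows explicitly using the wheel structure, then add componentwise, and finally verify the symmetry condition required for membership in $\Delta$.

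To write down the $(k+1)^{\rm th}$ row of $\widetilde{D}$, I would translate it to a distance computation in $W_n$: this row comes from the $(k+2)^{\rm th}$ row of $D$ (dropping the first entry), which records the distances from $w_{k+2}$ to $w_2,\dotsc,w_n$. In $W_n$, the vertex $w_{k+2}$ is adjacent to $w_1$, $w_{k+1}$, $w_{k+3}$, so the $j^{\rm th}$ entry of this row equals $0$ if $j=k+1$, equals $1$ if $j\in\{k,k+2\}$, and equals $2$ otherwise. Similarly, the $(n-k)^{\rm th}$ row of $\widetilde{D}$ records distances from $w_{n-k+1}$ to $w_2,\dotsc,w_n$; since $w_{n-k+1}$ is adjacent to $w_1$, $w_{n-k}$, $w_{n-k+2}$, the $j^{\rm th}$ entry equals $0$ if $j=n-k$, equals $1$ if $j\in\{n-k-1,n-k+1\}$, and equals $2$ otherwise.

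The only point that requires care is that the special indices from the two rows do not collide. The hypothesis $1<k<m-1$ combined with $n=2m+1$ being odd guarantees that the sets $\{k,k+1,k+2\}$ and $\{n-k-1,n-k,n-k+1\}$ are disjoint (an equality such as $k+2=n-k-1$ would force $n=2k+3$, hence $k=m-1$, excluded; the other potential collisions are ruled out similarly). Thus adding the two rows term by term gives $q_j^k=2$ at $j=k+1$ and $j=n-k$ (one entry is $0$, the other is $2$), $q_j^k=3$ at each of $j=k,k+2,n-k-1,n-k+1$ (one entry is $1$, the other is $2$), and $q_j^k=4$ at every remaining $j$ (both entries are $2$). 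This matches the stated formula.

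Finally, for $q^k \in \Delta$ I would check the relation $q_j^k = q_{n+1-j}^k$ for $j=2,\dotsc,n-1$. The distinguished sets $\{k+1,n-k\}$ and $\{k,k+2,n-k-1,n-k+1\}$ are each closed under the involution $j\mapsto n+1-j$ (for instance, $n+1-(k+1)=n-k$ and $n+1-k=n-k+1$), so the three level sets $\{q^k_j=2\}$, $\{q^k_j=3\}$, $\{q^k_j=4\}$ are invariant under this involution, giving the symmetry. The main obstacle in executing this plan is bookkeeping: ensuring that the disjointness of the special index sets, and their closure under $j\mapsto n+1-j$, are checked for every case captured by $1<k<m-1$, so that no accidental coincidence inflates or deflates an entry.
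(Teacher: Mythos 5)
Your proposal is correct and follows essentially the same route as the paper: write $q^k$ as the sum of the $(k+1)^{\rm th}$ and $(n-k)^{\rm th}$ rows of $\widetilde{D}$, determine each row from the adjacencies of $w_{k+2}$ and $w_{n-k+1}$, verify that the hypothesis $1<k<m-1$ keeps the two sets of special indices apart (the paper phrases this as $k+2\leq n-k-2$), and conclude membership in $\Delta$ by noting the level sets are invariant under $j\mapsto n+1-j$. No gaps worth noting.
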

\begin{proof}
Let $1<k<m-1$. Since $c^k$ has $1$ in the $(k+1)^{\rm th}$ and $(n-k)^{\rm th}$ positions and zeros elsewhere, $q^k$ is the sum of $(k+1)^{\rm th}$ and $(n-k)^{\rm th}$ rows of $\widetilde{D}$. Let these rows be $\theta=(\theta_1,\dotsc,\theta_{n-1})$ and $\eta=(\eta_1,\dotsc,\eta_{n-1})$ respectively.
Let $(s_1,\dotsc,s_n)$ be the $(k+2)^{\rm th}$ row of $D$. The vertex $w_{k+2}$ is adjacent to $w_1$, $w_{k+1}$ and $w_{k+3}$.

We now have
\begin{equation*}
s_{j}=
    \begin{cases}
    0 & \mbox{if~}j=k+2 \\
    1 & \mbox{if~}j=1,k+1,k+3\\
    2 & \mbox{otherwise.}
    \end{cases}
    \end{equation*}
As $(\theta_1,\dotsc,\theta_{n-1})=(s_2,\dotsc,s_n)$,
\begin{equation} \label{theta}
\theta_j=
    \begin{cases}
    0 & \mbox{if~}j=k+1 \\
    1 & \mbox{if~}j=k,k+2\\
    2 & \mbox{otherwise.}
    \end{cases}
    \end{equation}
Let $(n-k+1)^{\rm th}$ row of $D$ be $(t_1,\dotsc,t_n)$.  Then,
\begin{equation*} \label{t1}
t_{j}=
    \begin{cases}
    0 & \mbox{if~}j=n-k+1 \\
    1 & \mbox{if~}j=1,n-k,n-k+2\\
    2 & \mbox{otherwise.}
    \end{cases}
    \end{equation*}
Because $(\eta_1,\dotsc,\eta_{n-1})=(t_{2},\dotsc,t_n)$,
\begin{equation} \label{eta}
\eta_{j}=
    \begin{cases}
    0 & \mbox{if~}j=n-k\\
    1 & \mbox{if~}j=n-k-1,n-k+1\\
    2 & \mbox{otherwise.}
    \end{cases}
    \end{equation}
We now compute $\theta+ \eta$.
Since $1<k<m-1$, we have 
$n-2k > n-2m+1$. As $n-2m+1=2$, $n-2k> 2$. Thus, $n-k-1 > k+1$. Combining this inequality with the fact that $k < m-1$, we have 
$$k+2 \leq n-k-2.$$
From $(\ref{theta})$ and $(\ref{eta})$, we immediately get  
\begin{equation} \label{s1}
(\theta+\eta)_{j}=
    \begin{cases}
    4 & \mbox{if~}j=1,\dotsc,k-1 \\
    3 & \mbox{if~}j=k,k+2\\
    2 & \mbox{if~} j=k+1.
    \end{cases}
    \end{equation}
 If $k+2<j\leq n-k-2$, then $\theta_{j}=\eta_{j}=2$. So,
 \begin{equation}\label{snew}
(\theta+\eta)_{j}=4~~\mbox{for all}~~k+2<j\leq n-k-2.
 \end{equation}
We note that  
\begin{equation} \label{seta}
\eta_j=
    \begin{cases}
    1 & \mbox{if~}j=n-k-1,n-k+1 \\
    0 & \mbox{if~}j=n-k.
    \end{cases}
    \end{equation}
Since $\theta_{j}=2$ for all $j>k+2$ and $n-k-1>k+2 $, we have 
\begin{equation} \label{stheta}
\theta_{j}=2~~ \mbox{for all} ~~j \geq n-k-1.
\end{equation}
In view of $(\ref{seta})$ and $(\ref{stheta})$, 
\begin{equation} \label{s2}
(\theta+\eta)_j=
    \begin{cases}
    3 & \mbox{if~}j=n-k-1,n-k+1 \\
    2 & \mbox{if~}j=n-k.
    \end{cases}
    \end{equation}
Finally, from $(\ref{theta})$ and $(\ref{eta})$, 
\[\theta_{j}=2~~\mbox{and}~~\eta_{j}=2~~~\mbox{for all}~ j> n-k+1.\] So,
\begin{equation} \label{s3}
(\theta+\eta)_{j}=4~~\mbox{if~} n-k+1<j \leq n-1.
\end{equation}
By $(\ref{s1})$, $(\ref{snew})$, $(\ref{s2})$ and $(\ref{s3})$, we get \[q_j^k=
    \begin{cases}
    2 & \mbox{if~}j=k+1,n-k \\
    3 & \mbox{if~}j=k,k+2,n-k-1,n-k+1\\
    4 & \mbox{otherwise.}
    \end{cases}\]
We now show that $q^{k} \in \Delta$.
For this, we partition the set $\Omega:=\{2,\dotsc,n-1\}$ into three parts.
Define $\Omega_1:=\{k+1,n-k\}$, $\Omega_2:=\{k,k+2,n-k-1,n-k+1\}$ and
$\Omega_3:=\Omega \smallsetminus (\Omega_1 \cup \Omega_2)$.
Each $\Omega_i$ has the property 
\[\nu \in \Omega_{i} ~~\iff~~n+1-\nu \in \Omega_i.\]
 Therefore, $q^k \in \Delta$.
\end{proof}

\subsection{Computation of $\sum_{k=1}^{m} \al_k {c^{k}} \D$}
To simplify $M$, we need to compute the linear combination
$$f:=\sum_{k=1}^{m} \al_k {c^k}\D.$$
For $1\leq k \leq m$, define $q^k:={c^k}\D$. 
We shall write $q^k:=(q_{1}^{k},\dotsc,q_{n-1}^{k})$ and
$f:=(f_{1},\dotsc,f_{n-1})$. Now,
\begin{equation*}
\begin{aligned}
(f_1,\dotsc,f_n) &=\sum_{k=1}^{m} \al_k (q_{1}^k,\dotsc,q_{n-1}^{k}) \\
&=(\sum_{k=1}^{m} \al_k q_{1}^{k},\dotsc,\sum_{k=1}^{m} \al_k q_{n-1}^{k}).
\end{aligned}
\end{equation*}
We now compute $f$ precisely.
\begin{lemma} \label{f1}
\[f_{1}=\frac{3-n}{n-1}.\]
\end{lemma}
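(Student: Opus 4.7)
The plan is to read off the first coordinate $q_1^k$ of each vector $q^k=c^k\widetilde{D}$ from the preceding four lemmas, substitute into $f_1=\sum_{k=1}^m\alpha_k q_1^k$, and then reduce the resulting combination of $\alpha_k$'s using identity $\K$ together with a direct evaluation of $\alpha_1$.

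First I would tabulate the values $q_1^k$ for $k=1,\dotsc,m$. From Lemma \ref{first_row}, $q_1^1=2$. From Lemma \ref{1<k} (with $j=1$), for each $2\le k\le m-2$ the index $j=1$ falls outside the special positions $\{k,k+1,k+2,n-k-1,n-k,n-k+1\}$, so $q_1^k=4$. From Lemma \ref{n-3/2_row}, since $n\ge 9$ gives $\tfrac{n-5}{2}\ge 2$, the first entry of $c^{m-1}\widetilde{D}$ is also $4$. Finally, Lemma \ref{n-1/2_row} gives $q_1^m=2$. Hence
\[
f_1 \;=\; 2\alpha_1 \;+\; 4\sum_{k=2}^{m-1}\alpha_k \;+\; 2\alpha_m
     \;=\; 4\sum_{k=1}^{m}\alpha_k \;-\; 2\alpha_1 \;-\; 2\alpha_m.
\]

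Next I would invoke identity $\K$, which states
\[
2\sum_{k=1}^{m}\alpha_k-\alpha_m=\frac{6m-4m^2+1}{6(n-1)},
\]
so that $4\sum_{k=1}^m\alpha_k-2\alpha_m=\dfrac{6m-4m^2+1}{3(n-1)}$. The only remaining task is to evaluate $\alpha_1$ explicitly. A short case check on the parity of $m$ shows $(-1)^{g(1)}=1$ in both cases (if $m$ is even then $g(1)=m$ is even; if $m$ is odd then $g(1)=m+1$ is even). Since $2m^2-6(m-1)^2+1=-4m^2+12m-5$, one obtains
\[
\alpha_1=\frac{-4m^2+12m-5}{6(n-1)}.
\]

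Finally I would combine these pieces:
\[
f_1=\frac{6m-4m^2+1}{3(n-1)}-\frac{-4m^2+12m-5}{3(n-1)}=\frac{-6m+6}{3(n-1)}=\frac{2(1-m)}{n-1},
\]
and substituting $m=\tfrac{n-1}{2}$ gives $1-m=\tfrac{3-n}{2}$, so $f_1=\tfrac{3-n}{n-1}$, as desired. The main potential obstacle is not the algebra itself but making sure every case distinction (especially for $k=m-1$ and the two sign cases for $(-1)^{g(1)}$) is clean, and that the hypothesis $n\ge 9$ is used exactly at the point needed for Lemma \ref{n-3/2_row} to yield $q_1^{m-1}=4$.
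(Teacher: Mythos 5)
Your proposal is correct and follows essentially the same route as the paper: read off $q_1^k$ ($2$ for $k=1,m$ and $4$ otherwise) from Lemmas \ref{first_row}--\ref{1<k}, write $f_1=4\sum_{k=1}^m\al_k-2\al_1-2\al_m$, and finish with identity $\K$ together with the explicit value $\al_1=\frac{-4m^2+12m-5}{6(n-1)}$. The only differences are presentational (you justify the sign $(-1)^{g(1)}=1$ and the $k=m-1$ case a bit more explicitly), so there is nothing to correct.
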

\begin{proof}
By Lemma \ref{first_row}, \ref{n-1/2_row}, \ref{n-3/2_row} and $\ref{1<k}$, we have 
\[q_1^k=
    \begin{cases}
    2 & \mbox{if~}k=1,m \\
    4 & \mbox{if~}k=2,\dotsc,m-1.\\
    \end{cases}\]
In view of this,
\begin{equation} \label{xm}
    \begin{aligned}
        f_{1}&=\sum_{k=1}^{m} \al_k q_{1}^{k}   \\
        &= 2 \al_1+2\al_m + 4\sum_{k=2}^{m-1} \al_k \\ 
        &=2 \al_1 + 2 \al_m + 4\sum_{k=1}^{m} \al_k- 4 \al_1 - 4 \al_m \\
        &=  -2\al_1-2\al_m + 4\sum_{k=1}^{m} \al_k.
\end{aligned}
\end{equation} 
Let $\delta=2 \sum_{k=1}^{m} \al_k - \al_m$. By $\K$,
\[\delta=\frac{6m-4m^2+1}{6(n-1)}. \]
Therefore,
\begin{equation}\label{x11}
    \begin{aligned}
    4\sum_{k=1}^{m} \al_k-2\al_m = \frac{-8m^2+12m+2}{6(n-1)}.
    \end{aligned}
\end{equation}
From $(\ref{al})$,
\begin{equation}\label{x21}
    \begin{aligned}
        \alpha_1 = \frac{-4m^2+12m-5}{6(n-1)}.
    \end{aligned}
\end{equation}
Substituting $(\ref{x11})$ and $(\ref{x21})$ in $(\ref{xm})$ gives
\[f_1=\frac{2}{n-1}-\frac{2m}{n-1}. \]
Since $m=\frac{n-1}{2}$,

\[f_1=\frac{3-n}{n-1}.\] 
The proof is complete.
\end{proof}

\begin{lemma} \label{f2}
\[ f_{2}=\frac{-n^2+8n-18}{6(n-1)}.\]
\end{lemma}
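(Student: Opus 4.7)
The plan is to compute $f_2 = \sum_{k=1}^{m} \al_k q_2^k$ by pulling the value of $q_2^k$ out of the four preceding lemmas, grouping the sum suitably, and then invoking identity $\K$ together with the explicit formulas for $\al_1$ and $\al_2$.

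First I would collect the coordinates $q_2^k$ for each admissible $k$. From Lemma \ref{first_row}, $q_2^1 = 2$. From Lemma \ref{n-1/2_row}, since $n \geq 9$ gives $\frac{n-3}{2} \geq 3$, the second coordinate of $c^m \D$ lies in the initial block of $2$'s, so $q_2^m = 2$. From Lemma \ref{n-3/2_row}, again using $n \geq 9$ so that $\frac{n-5}{2} \geq 2$, the second coordinate of $c^{m-1} \D$ lies in the initial block of $4$'s, so $q_2^{m-1} = 4$. Finally, for $1 < k < m-1$, Lemma \ref{1<k} forces $q_2^k = 3$ exactly when $k = 2$ (the positions $k+1, n-k, k+2, n-k-1, n-k+1$ cannot equal $2$ in the allowed range), and $q_2^k = 4$ for $3 \leq k \leq m-2$.

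Once these values are tabulated, I would write
\begin{equation*}
f_2 = 2\al_1 + 3\al_2 + 4\sum_{k=3}^{m-1}\al_k + 2\al_m
    = -2\al_1 - \al_2 - 2\al_m + 4\sum_{k=1}^{m} \al_k,
\end{equation*}
which rearranges as $f_2 = -2\al_1 - \al_2 + \bigl(4\sum_{k=1}^{m}\al_k - 2\al_m\bigr)$. By identity $\K$, the parenthesized expression equals $\frac{-8m^2 + 12m + 2}{6(n-1)}$. The explicit formula for $\al_1$ used already in Lemma \ref{f1} gives $\al_1 = \frac{-4m^2 + 12m - 5}{6(n-1)}$, and an analogous evaluation of (\ref{al}) at $k=2$ (checking in both parities of $m$ that $(-1)^{g(2)} = -1$) yields $\al_2 = \frac{4m^2 - 24m + 23}{6(n-1)}$.

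Substituting these three expressions and simplifying gives $f_2 = \frac{-4m^2 + 12m - 11}{6(n-1)}$. The final step is just to replace $m$ by $\frac{n-1}{2}$: $-4m^2 = -(n-1)^2$ and $12m = 6(n-1)$, producing $-n^2 + 8n - 18$ in the numerator, as claimed. The only mildly delicate point is keeping track of which of Lemmas \ref{first_row}--\ref{1<k} supplies each $q_2^k$ and verifying that the parity-dependent sign in $\al_2$ works out the same way in both cases of $m$; the rest is routine arithmetic.
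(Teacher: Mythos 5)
Your proposal is correct and follows essentially the same route as the paper: tabulate $q_2^k$ from Lemmas \ref{first_row}--\ref{1<k} (getting $2$ for $k=1,m$, $3$ for $k=2$, $4$ otherwise), rewrite the sum as $-2\al_1-\al_2-2\al_m+4\sum_{k=1}^m\al_k$, and finish with identity $\K$ and the explicit values of $\al_1,\al_2$ before substituting $m=\frac{n-1}{2}$. All the intermediate quantities, including $\al_2=\frac{4m^2-24m+23}{6(n-1)}$ and $f_2=\frac{-4m^2+12m-11}{6(n-1)}$, match the paper's computation.
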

\begin{proof}
By Lemma \ref{first_row}, \ref{n-1/2_row}, \ref{n-3/2_row} and $\ref{1<k}$,
\[q_{2}^{k} = \begin{cases}
2 & \mbox{if}~k=1, m\\
3 & \mbox{if}~k=2\\
4 &\mbox{otherwise.}
\end{cases}\]
This gives,      
      \begin{equation*}
    \begin{aligned}
        f_2 &= 2(\al_1+\al_m) + 3 \al_2 
       +4 \sum_{k=3}^{m-1} \al_k\\ &=-2\al_1-2\al_m +4 \sum_{k=1}^{m} \al_k- \al_2 
       \end{aligned}
        \end{equation*}
Put $\delta=2 \sum_{k=1}^{m} \al_k -\al_m$.   By $\K$, $$\delta=\frac{6m-4m^2+1}{6(n-1)}.$$
 Now, 
\begin{equation*} \label{f21} 
 f_2=-2 \al_1+2 \delta -\al_2. 
 \end{equation*}
 We note that    
\[\alpha_1 = \frac{(-4m^2+12m-5)}{6(n-1)}~~\mbox{and}~~\al_2=\frac{4m^2-24m+23}{6(n-1)}. \]
 In view of the above equations,
      \begin{equation*}
    \begin{aligned}
        f_2 &= \frac{-2(-4m^2+12m-5)+2(6m-4m^2+1)-(4m^2-24m+23)}{6(n-1)} \\
        &=\frac{-4m^2+12m-11}{6(n-1)} .\\
       \end{aligned}
        \end{equation*}
As $m=\frac{n-1}{2}$,        
       \[f_2=\frac{-n^2+8n-18}{6(n-1)}.\]
\end{proof}

\begin{lemma} \label{fj}
Let $2<j \leq \frac{n-3}{2}$. Then, $$f_j=\begin{cases}
        \dfrac{-2n^2+10n-18}{6(n-1)} &~\mbox{if}~j~\mbox{is even} \\
        \\
       \dfrac{-2n^2+10n+6}{6(n-1)}&~\mbox{if}~j~\mbox{is odd}.
        \end{cases}$$
\end{lemma}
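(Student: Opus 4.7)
The strategy is to mimic the computations in Lemmas \ref{f1} and \ref{f2}: identify, for each $j$ with $2 < j \le \frac{n-3}{2}$, which values of $k \in \{1,\dots,m\}$ make $q_j^k$ differ from $4$, and then use the identities $\K$ and $\KK$ from the identity lemma to collapse the resulting sum.

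First I would determine the shape of the column $k \mapsto q_j^k$ for a fixed $j$ in the range $3 \le j \le \frac{n-3}{2} = m-1$. Combining Lemmas \ref{first_row}, \ref{n-1/2_row}, \ref{n-3/2_row}, and \ref{1<k}, $q_j^k = 2$ precisely when $j = k+1$ or $j = n-k$, and $q_j^k = 3$ precisely when $j \in \{k, k+2, n-k-1, n-k+1\}$; all other entries (for $1 \le k \le m-1$) equal $4$, while $q_j^m = 2$ for every $j$ with $j \le \frac{n-3}{2}$. For our range of $j$, the constraints $j = n-k$, $j = n-k-1$, $j = n-k+1$ force $k > m$ and are hence vacuous. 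What remains is:
\begin{itemize}
\item $q_j^{j-1} = 2$,
\item $q_j^{j} = 3$ and $q_j^{j-2} = 3$,
\item $q_j^{m} = 2$,
\item $q_j^{k} = 4$ for all other $k \in \{1,\dots,m\}$.
\end{itemize}
A small check is needed at the boundary $k \in \{1, m-1\}$: for $k=1$ the formula of Lemma \ref{1<k} is consistent with Lemma \ref{first_row} on $j \ge 3$ (the only discrepancy is at $j=1$, which we avoid), and for $k=m-1$ Lemma \ref{n-3/2_row} agrees with the Lemma \ref{1<k} pattern on $j \le m-1$. Since $j \le m-1$, the four indices $j-2,\, j-1,\, j,\, m$ are distinct.

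Consequently,
\begin{equation*}
f_j \;=\; 4\sum_{k=1}^{m}\alpha_k \;-\; 2\alpha_{j-1} \;-\; \alpha_j \;-\; \alpha_{j-2} \;-\; 2\alpha_m,
\end{equation*}
obtained by writing each nonstandard contribution as $4\alpha_k$ minus the appropriate correction. Now I apply $\K$ to rewrite $4\sum_{k=1}^m \alpha_k - 2\alpha_m = \frac{6m-4m^2+1}{3(n-1)}$, and $\KK$ (which is applicable since $1 \le j-2$ and $j \le m-1 < m$) to replace $2\alpha_{j-1}+\alpha_j+\alpha_{j-2}$ by $(-1)^j\frac{2}{n-1}$. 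This gives
\begin{equation*}
f_j \;=\; \frac{6m-4m^2+1}{3(n-1)} \;-\; (-1)^j\,\frac{2}{n-1} \;=\; \frac{-2n^2+10n-6-12(-1)^j}{6(n-1)},
\end{equation*}
after substituting $m=\frac{n-1}{2}$, so that $6m-4m^2+1 = -n^2+5n-3$. Splitting into the parities of $j$ yields the two cases in the statement.

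The only real obstacle is the bookkeeping in the first step: one must check that for $j$ in the claimed range no further index $k$ contributes a non-$4$ value (in particular that the ``right-hand'' exceptional positions $n-k,\,n-k\pm 1$ lie outside $\{1,\dots,m\}$, and that the boundary formulas at $k=1,\,m-1,\,m$ from Lemmas \ref{first_row}, \ref{n-3/2_row}, \ref{n-1/2_row} match the generic pattern on the relevant subset of $j$). Once this is confirmed, identities $\K$ and $\KK$ finish the computation in one line each.
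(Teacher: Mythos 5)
Your proposal is correct and follows essentially the same route as the paper: it assembles the pattern $q_j^k=2$ for $k=j-1,m$, $q_j^k=3$ for $k=j,j-2$, $q_j^k=4$ otherwise (the paper's equation for $q_j^k$), then writes $f_j=4\sum_{k=1}^m\alpha_k-2\alpha_{j-1}-\alpha_j-\alpha_{j-2}-2\alpha_m$ and collapses it using $\K$ and $\KK$, exactly as in the paper's computation. The boundary checks you flag (that $n-k,\,n-k\pm1$ cannot hit $j\le\frac{n-3}{2}$, and that the $k=1,\,m-1,\,m$ rows agree with the generic pattern there) are the same verifications the paper performs via Lemmas \ref{first_row}, \ref{n-1/2_row}, \ref{n-3/2_row}, \ref{1<k}, and your final algebra with $m=\frac{n-1}{2}$ matches the stated cases.
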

\begin{proof}
Let $j$ be such that $2 < j \leq \frac{n-3}{2}$.
By Lemma $\ref{first_row}$,  \[q^1_j= \begin{cases}
3&~\mbox{if}~j=3\\
4&~\mbox{otherwise}.
\end{cases}\]
By Lemma \ref{n-1/2_row}, 
\[q_{j}^{m}=2 .\]
In view of Lemma \ref{n-3/2_row}, \[q^{m-1}_j = \begin{cases}
3&~\mbox{if}~j=m-1\\
4&~\mbox{otherwise}.
\end{cases}\]
From Lemma $\ref{1<k}$,
\[1<k<\frac{n-3}{2} \implies q^k_j= \begin{cases}
2&~\mbox{if}~j=k+1\\
3&~\mbox{if}~j=k,k+2\\
4&~\mbox{otherwise}.
\end{cases}\]
Together, all these equations give
\[q^k_j= \begin{cases}
2&~\mbox{if}~j=k+1~\mbox{and}~1<k<\frac{n-3}{2}\\
2&~\mbox{if}~k=\frac{n-1}{2}\\
3&~\mbox{if}~j=3~\mbox{and}~ k=1 \\
3&~\mbox{if}~j= k=\frac{n-3}{2} \\
3&~\mbox{if}~j=k,k+2~\mbox{and}~1<k<\frac{n-3}{2}  \\
4&~\mbox{otherwise}.
\end{cases}\]
Thus,
\begin{equation}\label{qjk}
    q^k_j= \begin{cases}
2&~\mbox{if}~k=j-1,m\\
3&~\mbox{if}~k=j,j-2 \\
4&~\mbox{otherwise}.
\end{cases}
\end{equation}
We need to compute 
\[f_j= \sum_{k=1}^{m} \al_k q_{j}^{k}\]
for $2<j \leq \frac{n-3}{2}$. 
By $\KK$,
\begin{equation*}
    \begin{aligned}
    2\al_{j-1}+\al_{j} + \al_{j-2} &= (-1)^j \frac{2}{n-1}.
    \end{aligned}
\end{equation*}
Define $\Omega:=\{j-2,j-1,j,m\}$. By  (\ref{qjk}), we have
\begin{equation}\label{pp1}
    \begin{aligned}
    f_{j} &= 2\al_{j-1} + 2 \al_m +3\al_{j} + 3\al_{j-2} +4 \sum_{k \notin \Omega}\al_k \\ &=  -(2\al_{j-1}  +\al_{j} +\al_{j-2}) - 2 \al_m +4 \sum_{k=1}^m\al_k \\
    &=-(-1)^{j} \frac{2}{n-1}-2(\al_m-2 \sum_{k=1}^{m} \al_k) \\
    &=-(-1)^{j} \frac{2}{n-1}+\frac{6m-4m^2+1}{3(n-1)}.\\
    \end{aligned}
\end{equation}
where the last two equations follow from $\K$ and $\KK$.
Replacing $m$ by $\frac{n-1}{2}$ in $(\ref{pp1})$, we get
$$f_j=\begin{cases}
        \dfrac{-2n^2+10n-18}{6(n-1)} &~\mbox{if}~j~\mbox{is even} \\
        \\
       \dfrac{-2n^2+10n+6}{6(n-1)}&~\mbox{if}~j~\mbox{is odd}.
        \end{cases}$$
The proof is complete.
\end{proof}

\begin{lemma} \label{fn-1/2}
\[ f_{m}=\begin{cases}
        \dfrac{-2n^2+10n-18}{6(n-1)} &~\mbox{if}~m~\mbox{is even} \\
        \\
       \dfrac{-2n^2+10n+6}{6(n-1)}&~\mbox{if}~m~\mbox{is odd}.
        \end{cases} \]
\end{lemma}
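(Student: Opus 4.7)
My plan is to mirror the proof of Lemma \ref{fj} with $j$ replaced by $m$, the only novelty being that the three exceptional indices $k=m,\,m-1,\,m-2$ all come from different lemmas (the last row, next-to-last row, and the middle range) rather than from a single one.

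First, I would read off $q^k_m := (c^k\widetilde D)_m$ from the four lemmas of the previous subsection. Since $n\ge 9$ forces $m\ge 4$, the index $m$ lies strictly inside the all-$4$ block of $c^1\widetilde D$ (Lemma \ref{first_row}), giving $q^1_m = 4$. From Lemma \ref{n-1/2_row}, the entry of $c^m\widetilde D$ in position $m$ is $1$, so $q^m_m=1$. From Lemma \ref{n-3/2_row}, position $m$ of $c^{m-1}\widetilde D$ is one of the three central $2$'s, so $q^{m-1}_m = 2$. Finally for $1<k<m-1$, Lemma \ref{1<k} says $q^k_m\in\{2,3,4\}$ with $q^k_m=2$ iff $k+1=m$ or $n-k=m$ (impossible for $k<m-1$), and $q^k_m=3$ iff $k\in\{m,m-2,n-m-1,n-m+1\}=\{m,m-2,m,m+2\}$, which in the range $1<k<m-1$ forces $k=m-2$. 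Combining:
\[
q^k_m=\begin{cases}1 & k=m\\ 2 & k=m-1\\ 3 & k=m-2\\ 4 & \text{otherwise.}\end{cases}
\]

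Next I would compute $f_m=\sum_{k=1}^m\alpha_k q^k_m$ by writing it as $4\sum_{k=1}^m\alpha_k$ minus the corrections on the three exceptional indices:
\begin{equation*}
f_m = 4\sum_{k=1}^m\alpha_k - \alpha_{m-2} - 2\alpha_{m-1} - 3\alpha_m
    = -\bigl(\alpha_{m-2}+2\alpha_{m-1}+\alpha_m\bigr) + 2\Bigl(2\sum_{k=1}^m\alpha_k - \alpha_m\Bigr).
\end{equation*}
Identity \KK\ applied at $j=m$ (legitimate since $m\ge 4\ge 3$ guarantees $m,m-1,m-2\in\{1,\dots,m\}$) gives the first bracket as $(-1)^m\tfrac{2}{n-1}$, and identity \K\ gives the second as $\tfrac{6m-4m^2+1}{6(n-1)}$. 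Therefore
\[
f_m = -(-1)^m\frac{2}{n-1} + \frac{6m-4m^2+1}{3(n-1)}.
\]

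Finally, substituting $m=\frac{n-1}{2}$ yields $6m-4m^2+1=-n^2+5n-3$, so
\[
f_m = -(-1)^m\frac{12}{6(n-1)} + \frac{-2n^2+10n-6}{6(n-1)},
\]
which is $\frac{-2n^2+10n-18}{6(n-1)}$ when $m$ is even and $\frac{-2n^2+10n+6}{6(n-1)}$ when $m$ is odd, as claimed. I do not foresee a real obstacle: the only point requiring care is checking that for $n\ge 9$ the index $m$ really falls into the generic $4$-block of $c^1\widetilde D$ and that the $k=m-2$ case of Lemma \ref{1<k} is correctly identified, so that no stray index outside $\{m-2,m-1,m\}$ is missed in the correction sum.
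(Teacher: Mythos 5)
Your proposal is correct and follows essentially the same route as the paper: you read off $q^k_m$ from the four lemmas (getting $1,2,3$ at $k=m,m-1,m-2$ and $4$ otherwise), rewrite $f_m$ as $2\bigl(2\sum_{k=1}^m\alpha_k-\alpha_m\bigr)-\bigl(2\alpha_{m-1}+\alpha_m+\alpha_{m-2}\bigr)$, and finish with \K, \KK\ and the substitution $m=\frac{n-1}{2}$, exactly as in the paper's proof. Your extra care in verifying that $m$ falls in the all-$4$ block of $c^1\widetilde D$ and that only $k=m-2$ contributes a $3$ is a welcome elaboration of details the paper leaves implicit.
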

\begin{proof}
In view of Lemma \ref{first_row}, \ref{n-1/2_row}, \ref{n-3/2_row} and $\ref{1<k}$, we have
\begin{equation} \label{fm}
    q_{m}^{k}=
    \begin{cases}
    1 &\mbox{if~}k=m\\
    2 &\mbox{if~}k=m-1\\
    3 & \mbox{if~}k=m-2\\
    4 & \mbox{otherwise.} \\
 \end{cases}
\end{equation}
We need to compute
 \[f_{m}=\sum_{k=1}^{m}\al_k q_m^{k}. \] By $(\ref{fm})$,
\begin{equation*}\label{fn-1}
    \begin{aligned}
    f_{m} &= 4\sum_{k=1}^{m-3}\al_k + 3\al_{m-2}+2\al_{m-1}+\al_{m} \\ 
    &=  4\sum_{k=1}^{m}\al_k -\al_{m-2}-2\al_{m-1}-3\al_{m} \\
    &=2(2 \sum_{k=1}^{m} \al_k -\al_m)-\al_m-\al_{m-2}-2 \al_{m-1}.
    \end{aligned}
\end{equation*}
Define $$\delta:=2(\sum_{k=1}^{m} \al_k - \al_m)~\mbox{and}~\gamma:=\al_m+\al_{m-2}+2 \al_{m-1}. $$
In view of $\K$ and $\KK$,
\begin{equation*}
f_{m}=\frac{6m-4m^2+1}{3(n-1)}-(-1)^{m}\frac{2}{n-1}.
\end{equation*}
Upon substituting $m=\frac{n-1}{2}$,
$$f_{m}=\begin{cases}
        \dfrac{-2n^2+10n-18}{6(n-1)} &~\mbox{if}~m~\mbox{is even} \\
        \\
       \dfrac{-2n^2+10n+6}{6(n-1)}&~\mbox{if}~m~\mbox{is odd}.
        \end{cases}$$
The proof is complete.
\end{proof}

\begin{lemma} \label{fn+1/2}
\[ f_{\frac{n+1}{2}}=\begin{cases}
        \dfrac{-2n^2+10n+6}{6(n-1)} &~\mbox{if}~m~\mbox{is even} \\
        \\
       \dfrac{-2n^2+10n-18}{6(n-1)}&~\mbox{if}~m~\mbox{is odd}.
        \end{cases} \]
\end{lemma}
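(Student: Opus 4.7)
The plan is to carry out the same kind of accounting that gave $f_m$ in Lemma \ref{fn-1/2}, but evaluated at index $m+1=\frac{n+1}{2}$ instead. First I would read off the values $q^k_{m+1}$ for $k=1,\dotsc,m$ by invoking Lemmas \ref{first_row}, \ref{n-3/2_row}, \ref{n-1/2_row} and \ref{1<k}. For $k=m$, Lemma \ref{n-1/2_row} places the zero entry of $c^m\widetilde{D}$ at position $m+1$, so $q^m_{m+1}=0$. For $k=m-1$, Lemma \ref{n-3/2_row} locates three consecutive $2$'s at positions $m,m+1,m+2$, so $q^{m-1}_{m+1}=2$. For $1<k<m-1$, Lemma \ref{1<k} gives $q^k_{m+1}=4$ since none of $k,k+1,k+2,n-k-1,n-k,n-k+1$ equals $m+1$ when $k<m-1$; and for $k=1$, Lemma \ref{first_row} gives $q^1_{m+1}=4$ because $4\le m+1\le n-3$ whenever $n\ge 9$.

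With these values in hand, the sum collapses as
\[
f_{m+1}=4\sum_{k=1}^{m-2}\alpha_k+2\alpha_{m-1}=4\sum_{k=1}^{m}\alpha_k-2\alpha_{m-1}-4\alpha_m.
\]
Setting $\delta:=2\sum_{k=1}^{m}\alpha_k-\alpha_m$, identity $\K$ rewrites this as
\[
f_{m+1}=2\delta-2(\alpha_m+\alpha_{m-1})=\frac{6m-4m^2+1}{3(n-1)}-2(\alpha_m+\alpha_{m-1}).
\]

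Next I would evaluate $\alpha_m+\alpha_{m-1}$ directly from (\ref{al}). Because $g(m)=\frac{n+1}{2}=m+1$ and $g(m-1)=\frac{n-1}{2}=m$, a one-line computation yields
\[
\alpha_m+\alpha_{m-1}=\frac{(-1)^{m+1}(2m^2+1)+(-1)^{m}(2m^2-5)}{6(n-1)}=\frac{(-1)^{m+1}}{n-1}.
\]
Plugging this back gives $f_{m+1}=\tfrac{6m-4m^2+1}{3(n-1)}+\tfrac{2(-1)^m}{n-1}$, and substituting $m=\frac{n-1}{2}$ (so that $4m^2=n^2-2n+1$ and $6m=3n-3$) yields $\tfrac{-2n^2+10n+6}{6(n-1)}$ when $m$ is even and $\tfrac{-2n^2+10n-18}{6(n-1)}$ when $m$ is odd, which is the claimed formula.

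The only real hazard is the case split $q^{m-1}_{m+1}=2$ versus $q^m_{m+1}=0$: one must be careful that the tight configuration of $1$'s and $0$'s in Lemmas \ref{n-3/2_row} and \ref{n-1/2_row} is read at the right coordinate $m+1$, and that Lemma \ref{1<k} does not apply at the boundary values $k=1,m-1,m$. After that, everything is forced by identity $\K$ together with the direct evaluation of $\alpha_m+\alpha_{m-1}$, so no further use of $\KK$ or $\KKK$ is needed.
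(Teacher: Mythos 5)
Your proposal is correct and follows essentially the same route as the paper: the same determination of the entries $q^k_{\frac{n+1}{2}}$ (namely $0$ for $k=m$, $2$ for $k=m-1$, $4$ otherwise) and the same collapse $f_{\frac{n+1}{2}}=4\sum_{k=1}^{m}\al_k-2\al_{m-1}-4\al_m$. The only (harmless) variation is that you finish via identity $\K$ together with the direct evaluation $\al_m+\al_{m-1}=\frac{(-1)^{m+1}}{n-1}$, deferring the parity split to the final sign, whereas the paper invokes $\I$ and the combination $2\al_{m-1}+4\al_m$ with an explicit case split on $m$; both yield the stated values.
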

\begin{proof}
In view of Lemma \ref{first_row}, \ref{n-1/2_row}, \ref{n-3/2_row} and $\ref{1<k}$, we have
\begin{equation*}
    q_{\frac{n+1}{2}}^{k}=
    \begin{cases}
    0 &\mbox{if~}k=m\\
    2 & \mbox{if~}k=m-1\\
    4 & \mbox{otherwise.} \\
 \end{cases}
\end{equation*}
We now have
\begin{equation} \label{fn+1by2}
\begin{aligned}
f_{\frac{n+1}{2}}&=\sum_{k=1}^{m}\al_k q_{\frac{n+1}{2}}^{k} \\
&= 4\sum_{k=1}^{m-2}\al_k + 2\al_{m-1} \\ 
&= 4\sum_{k=1}^{m}\al_k - 2\al_{m-1}-4\al_m.
\end{aligned}
\end{equation}
 By (\ref{al}),
\[\al_m=(-1)^{m+1} \frac{2m^2+1}{6(n-1)}~~\mbox{and}~~\al_{m-1}=(-1)^m \frac{2m^2-5}{6(n-1)}, \]   
    and hence,
\begin{equation}\label{pp3}
   2\al_{m-1}+4\al_m = (-1)^{m-1}\frac{4m^2+14}{6(n-1)}.
    \end{equation}
Suppose $m$ is even. Then by $\I$, (\ref{fn+1by2}) and $(\ref{pp3})$,
\begin{equation*}
\begin{aligned}
f_{\frac{n+1}{2}}&=\frac{4(-3m^2+3m)+4m^2+14}{6(n-1)} \\
&=\frac{-8m^2+12m+14}{6(n-1)}.
    \end{aligned}
    \end{equation*}
     Substituting $m=\frac{n-1}{2}$ gives
    \[f_{\frac{n+1}{2}}=\frac{-2n^2+10n+6}{6(n-1)}. \]
Suppose $m$ is odd. Then by $\I$, (\ref{fn+1by2}) and $(\ref{pp3})$,
\begin{equation*}
\begin{aligned}
f_{\frac{n+1}{2}}&=\frac{4(-m^2+3m+1)-(4m^2+14)}{6(n-1)} \\
&=\frac{-8m^2+12m-10}{6(n-1)}.
    \end{aligned}
    \end{equation*}
    Upon substituting $m=\frac{n-1}{2}$,
    \[f_{\frac{n+1}{2}}=\frac{-2n^2+10n-18}{6(n-1)}. \]    
    This completes the proof.
\end{proof}
\noindent By Lemma $\ref{fj}$, $\ref{fn-1/2}$ and $\ref{fn+1/2}$, we get  
\begin{equation} \label{fjnew} 
2<j \leq \frac{n+1}{2} \implies
f_j=\begin{cases}
        \dfrac{-2n^2+10n-18}{6(n-1)} &~\mbox{if}~j~\mbox{is even} \\
        \\
       \dfrac{-2n^2+10n+6}{6(n-1)}&~\mbox{if}~j~\mbox{is odd.}
        \end{cases}
\end{equation}
To this end, we have computed $f_1,\dotsc,f_{\frac{n+1}{2}}$. We now deduce $f$.
\begin{lemma} \label{f-vector}
Define 
\[f_{1}:=\frac{3-n}{n-1}, \]
\[f_{2}:=\frac{-n^2+8n-18}{6(n-1)}, \]
\[\tau:=\frac{-2n^2+10n-18}{6(n-1)}, \]
\[\omega:=\frac{-2n^2+10n+6}{6(n-1)}. \]
Then,
\[f=(f_1,f_2,\omega,\tau,\omega,\tau,\dotsc,\tau,\omega,f_2). \]
\end{lemma}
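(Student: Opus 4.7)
The plan is to assemble the result from the preceding lemmas; no fresh computation is needed. First, $f_1$ and $f_2$ are given directly by Lemma~\ref{f1} and Lemma~\ref{f2}. For the coordinates $f_j$ with $3 \leq j \leq \frac{n+1}{2}$, I would invoke equation~(\ref{fjnew}), which already consolidates Lemmas~\ref{fj}, \ref{fn-1/2}, and \ref{fn+1/2}: it states $f_j = \omega$ when $j$ is odd and $f_j = \tau$ when $j$ is even. Since $j=3$ is odd, this gives $f_3 = \omega$, followed by $f_4 = \tau$, and the alternation $\omega, \tau, \omega, \tau, \ldots$ proceeds through position $\frac{n+1}{2}$.

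The remaining step is to recover the right half of $f$, namely the coordinates indexed by $\frac{n+1}{2} < j \leq n-1$. For this I would rely on the $\Delta$-symmetry of each summand. Lemmas~\ref{first_row}, \ref{n-1/2_row}, \ref{n-3/2_row}, and \ref{1<k} each assert that $c^k \widetilde{D} \in \Delta$. Since $\Delta$ is a linear subspace of $\mathbb{R}^{n-1}$, the combination $f = \sum_{k=1}^m \alpha_k c^k \widetilde{D}$ itself lies in $\Delta$, so $f_j = f_{n+1-j}$ for every $j \in \{2, \ldots, n-1\}$. Reflecting the already-computed first half about position $\frac{n+1}{2}$ yields $f_{n-1} = f_2$, $f_{n-2} = f_3 = \omega$, $f_{n-3} = f_4 = \tau$, and so on; stitching the two halves produces exactly $f = (f_1, f_2, \omega, \tau, \omega, \tau, \ldots, \tau, \omega, f_2)$.

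There is no essential obstacle here, since all the heavy lifting has been done in the five preceding lemmas. The only care required is a parity bookkeeping check: the middle block begins at position $3$ with value $\omega$ (odd index), and by symmetry its rightmost entry at position $n-2$ equals $f_3 = \omega$, so the alternating block indeed starts and ends with $\omega$, matching the display in the statement.
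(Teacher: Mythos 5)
Your proposal is correct and follows essentially the same route as the paper: it combines Lemma~\ref{f1}, Lemma~\ref{f2} and equation~(\ref{fjnew}) for the coordinates up to $\frac{n+1}{2}$, and then uses the fact that each $c^k\D\in\Delta$ together with linearity of $\Delta$ to get $f\in\Delta$, so the remaining coordinates follow by the reflection $f_j=f_{n+1-j}$. Your explicit parity bookkeeping at positions $3$ and $n-2$ is a welcome (if slightly more detailed) version of what the paper leaves implicit.
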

\begin{proof}
We begin with the following observation: If $x,y \in \Delta$ and $\beta \in \rr$, then $\beta x+y \in \Delta$.
We have shown that $c^1 \D, c^2\D,\dotsc,c^m\D \in \Delta$. Thus, 
\[f=\sum_{k=1}^{m} \al_k c^k \D \in \Delta.  \] 
So, by Lemma \ref{f1}, Lemma \ref{f2} and equation (\ref{fjnew}), 
\[f=(f_1,f_2,\omega,\tau,\omega,\tau,\dotsc,\tau,\omega,f_2). \]
The proof is complete.
\end{proof}

\subsection{Simplification of $M$}
Using the values of $f_{1}$, $f_2$,  $\omega$ and $\tau$, we simplify
the expression:
 \[M=\cir(\frac{n(n-2)}{6(n-1)}u-\frac{1}{2} \1'+f).  \]

\begin{lemma} \label{msimple}
\[ M=\frac{1}{2}J-2I+\frac{2}{n-1} \cir(1,-1,1,-1,\dotsc,-1). \] 
\end{lemma}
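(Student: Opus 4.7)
The plan is to exploit the fact that $M$ is already expressed as a circulant matrix, so it suffices to show that the first row of $M$ coincides with the first row of the claimed right-hand side. The target first row of $\frac{1}{2}J - 2I + \frac{2}{n-1}\cir(1,-1,1,-\dotsc,-1)$ is
\[
\Bigl(-\tfrac{3}{2} + \tfrac{2}{n-1},\ \tfrac{1}{2} - \tfrac{2}{n-1},\ \tfrac{1}{2} + \tfrac{2}{n-1},\ \tfrac{1}{2} - \tfrac{2}{n-1},\ \dotsc,\ \tfrac{1}{2} + \tfrac{2}{n-1},\ \tfrac{1}{2} - \tfrac{2}{n-1}\Bigr),
\]
(the alternation being well-defined because $n-1$ is even). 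So I would set $r_j := \frac{n(n-2)}{6(n-1)}u_j - \frac{1}{2} + f_j$ and verify that $r_j$ agrees with the appropriate entry of this target vector for every $j \in \{1,\dotsc,n-1\}$.

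Next I would split into four cases according to the structure of $u = (0,1,2,\dotsc,2,1)$ and of the vector $f$ obtained in Lemma \ref{f-vector}:
\begin{itemize}
\item[(a)] $j=1$: here $u_1=0$ and $f_1 = \frac{3-n}{n-1}$;
\item[(b)] $j=2$ or $j=n-1$: here $u_j = 1$ and, by the $\Delta$-symmetry of $f$, $f_j = f_2 = \frac{-n^2+8n-18}{6(n-1)}$;
\item[(c)] $j$ odd with $3 \le j \le n-2$: here $u_j = 2$ and $f_j = \omega = \frac{-2n^2+10n+6}{6(n-1)}$;
\item[(d)] $j$ even with $4 \le j \le n-3$: here $u_j = 2$ and $f_j = \tau = \frac{-2n^2+10n-18}{6(n-1)}$.
\end{itemize}
In each case I would combine the terms over the common denominator $6(n-1)$ and simplify; one finds, respectively, $r_1 = \frac{7-3n}{2(n-1)} = -\frac{3}{2} + \frac{2}{n-1}$, $r_2 = r_{n-1} = \frac{n-5}{2(n-1)} = \frac{1}{2} - \frac{2}{n-1}$, $r_j = \frac{n+3}{2(n-1)} = \frac{1}{2} + \frac{2}{n-1}$ in case (c), and $r_j = \frac{n-5}{2(n-1)} = \frac{1}{2} - \frac{2}{n-1}$ in case (d). These are exactly the entries prescribed by the target vector.

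There is no genuine obstacle here: every value needed has been produced by Lemmas \ref{first_row}--\ref{fn+1/2} and collected in Lemma \ref{f-vector}, and the algebraic manipulations are immediate cancellations once the terms are placed over the common denominator $6(n-1)$. The only points requiring a bit of care are bookkeeping: making sure that the symmetry relation $f_{n+1-j} = f_j$ is invoked at the right endpoint $j=n-1$ so that case (b) covers both boundary entries of $f$, and that the parity of $j$ is correctly matched with the alternation in $\cir(1,-1,1,\dotsc,-1)$. Since both $M$ and the right-hand side are circulant, agreement of the first rows immediately gives the claimed identity.
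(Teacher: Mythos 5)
Your proposal is correct and follows essentially the same route as the paper: both compute the first row $\frac{n(n-2)}{6(n-1)}u-\frac{1}{2}\1'+f$ entry by entry using Lemma \ref{f-vector}, splitting into the cases $j=1$; $j=2,n-1$; odd $j$; even $j$, and then identify the resulting vector with the first row of $\frac{1}{2}J-2I+\frac{2}{n-1}\cir(1,-1,\dotsc,-1)$. Your simplified values in each case match the paper's computations exactly.
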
 
\begin{proof}
Define \[h:=\frac{n(n-2)}{6(n-1)}u-\frac{1}{2} \1'+f.   \] 
Recall that $u$ is given by $(0,1,2,\dotsc,2,1)$.
Now, \begin{equation} \label{h1}
\begin{aligned}
h_1 &= -\frac{1}{2}+ \frac{3-n}{n-1} \\
&=-\frac{3}{2}+\frac{2}{n-1}.
\end{aligned}
\end{equation}
Suppose $j = 2, n-1$. Since $f_2= f_{n-1}$ and $u_{2}=u_{n-1}=1$, we get
\begin{equation} \label{h2}
\begin{aligned}
h_j &= \frac{n(n-2)}{6(n-1)}-\frac{1}{2}+\frac{-n^2+8n-18}{6(n-1)}\\
       &=\frac{n-3}{n-1}-\frac{1}{2} \\
       &=\frac{1}{2} - \frac{2}{n-1}.
\end{aligned}
\end{equation}
If $2<j<n-1$ is odd, then $f_{j}=\omega$ and $u_j=2$. So,
\begin{equation}\label{hjodd}
\begin{aligned}
h_j&=\frac{2n(n-2)}{6(n-1)}+\frac{6+10n-2n^2}{6(n-1)} -\frac{1}{2}\\
&=\frac{n+1}{n-1} -\frac{1}{2} \\
&=\frac{1}{2}+\frac{2}{n-1}. 
\end{aligned}
\end{equation}
If $2<j<n-1$ is even, then $f_{j}=\tau$ and $u_j=2$. So,
\begin{equation} \label{hjeven}
\begin{aligned}
h_{j}&=\frac{2n(n-2)}{6(n-1)} -\frac{1}{2} + \frac{-2n^2+10n-18}{6(n-1)} \\
&=\frac{n-3}{n-1} -\frac{1}{2} \\
&=\frac{1}{2} - \frac{2}{n-1}.
\end{aligned}
\end{equation}
In view of $(\ref{h1})$, $(\ref{h2})$, $(\ref{hjodd})$ and $(\ref{hjeven})$,
\[h=(-\frac{3}{2}+\frac{2}{n-1}, \frac{1}{2}-\frac{2}{n-1},\frac{1}{2}+\frac{2}{n-1},\dotsc,\frac{1}{2}-\frac{2}{n-1}). \]
Thus, $h$ can be written
\[h=\frac{1}{2}(-3,1,\dotsc,1)+\frac{2}{n-1}(1,-1,1,-1,\dotsc,-1).  \]
It is easy to see that
\[\cir(\frac{1}{2}(-3,1,\dotsc,1))=\frac{1}{2}J-2I. \]
Thus, 
\begin{equation*} \label{finalM}
M=\cir(h)=\frac{1}{2}J-2I+\frac{2}{n-1} \cir(1,-1,1,-1,\dotsc,-1). 
\end{equation*}
\end{proof}

\section{Inverse formula}
We now prove our main result.
\begin{theorem} \label{maint}
Let $W_n$ be a wheel graph with $n$ vertices, where $n$ is an odd integer.
If $D$ is the distance matrix of $W_n$ given by (\ref{E_block_form_D}), then
\[D^{\dag}=-\frac{1}{2} \L +\frac{4}{n-1}ww',  \]
where $w=\frac{1}{4}(5-n,1,\dotsc,1)'$.
\end{theorem}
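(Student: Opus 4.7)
The plan is to verify that $X:=-\frac{1}{2}\L+\frac{4}{n-1}ww'$ is the Moore--Penrose inverse of $D$. Since $X$ is symmetric by construction and $D$ is symmetric with one-dimensional null space spanned by $d:=(0,1,-1,1,-1,\dotsc,-1)'$ (satisfying $Dd=\0$ and $d'd=n-1$), it suffices to prove
\begin{enumerate}
\item[(a)] $Xd=\0$, and
\item[(b)] $XD=I-\dfrac{1}{n-1}dd'$.
\end{enumerate}
Indeed, (b) makes $XD$ symmetric, whence $DX=(XD)'=XD$; then $DXD=D$ (because $Dd=\0$) and $XDX=X$ (by (a)) give the four Penrose conditions.

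For (a), a short count gives $w'd=\frac{1}{4}(0+1-1+1-\cdots-1)=0$, since $n-1$ is even. Hence $Xd=-\frac{1}{2}\L d$, and it remains to show $\L d=\0$. Set $\widetilde{d}:=(1,-1,\dotsc,-1)'\in\rr^{n-1}$. From the block form of $\L$, the top entry of $\L d$ equals $-\frac{1}{2}\1'\widetilde{d}=0$. For the lower block, observe that $\widetilde{d}$ is the transpose of the row vector $v$, and by symmetry of each $C_k$, equation (\ref{vCk}) gives $C_k\widetilde{d}=2(-1)^k\widetilde{d}$ for $k<m$ and $C_m\widetilde{d}=(-1)^m\widetilde{d}$. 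Summing with weights $\al_k$ and applying identity $\KKK$ reduces the lower block of $\L d$ to $\bigl(\frac{n(n-2)}{6(n-1)}+\frac{2n-n^2}{6(n-1)}\bigr)\widetilde{d}=\0$.

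For (b), I would compute $w'D$ first. Using the block form of $D$, the equation $\D\1=2(n-3)\1$, and the row sums of $D$, a direct calculation yields $w'D=\frac{n-1}{4}\1_n'$, so that $\frac{4}{n-1}ww'D=w\1_n'$. Adding this to $-\frac{1}{2}\L D$, with $\L D$ taken from Lemma~\ref{M} and its lower-right block simplified by Lemma~\ref{msimple}, the $(1,1)$ entry of the sum becomes $1$, both off-diagonal blocks cancel, and the lower-right block reduces to $I_{n-1}-\frac{1}{n-1}\cir(1,-1,\dotsc,-1)$. Since $n$ is odd, the $(i,j)$ entry of this circulant is $(-1)^{j-i}=(-1)^{i+j}=\widetilde{d}_i\widetilde{d}_j$, so the circulant equals the outer product $\widetilde{d}\widetilde{d}'$. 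Because $dd'$ has zero first row and column and lower-right block $\widetilde{d}\widetilde{d}'$, the matrix $XD$ agrees block-for-block with $I-\frac{1}{n-1}dd'$.

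The main obstacle is really the bookkeeping in (b): combining the pre-computed $\L D$ with the rank-one correction $w\1_n'$ and recognizing the residual circulant as $\widetilde{d}\widetilde{d}'$, a step that works precisely because $n$ is odd. All the heavy combinatorial work has already been carried out in Lemmas~\ref{M} and~\ref{msimple} and in identities $\I$--$\KKK$; verifying the Penrose formula itself is then essentially an assembly of these ingredients.
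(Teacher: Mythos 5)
Your proposal is correct and takes essentially the same route as the paper: it rests on the same ingredients (the computation of $\L D$ in Lemmas \ref{M} and \ref{msimple}, the relation $Dw=\frac{n-1}{4}\1_n$, and identity $\KKK$ applied through $vC_k=(-1)^k2v$, $vC_m=(-1)^mv$), and it verifies the Penrose conditions from the same identity $XD=I-\frac{1}{n-1}dd'$ that the paper writes as $KD=I-\frac{1}{n-1}\left[\begin{smallmatrix}0&\0\\ \0&V\end{smallmatrix}\right]$. The only (harmless) repackaging is your observation that, since $n-1$ is even, $\cir(1,-1,\dotsc,-1)$ is the rank-one matrix $\widetilde{d}\widetilde{d}'$, so the paper's verifications $\D V=O$ and $V\L=O$ become $Dd=\0$ and $\L d=\0$.
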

\begin{proof}
We recall that $v=(1,-1,1,-1,\dotsc,-1)$ and $V=\cir(v)$. 
By Lemma \ref{M}
and Lemma $\ref{msimple}$, we have 
\[\L D=\left[\begin{array}{cccc}
\dfrac{1-n}{2} & \dfrac{5-n}{2} \1'  \\
        \\
        \dfrac{1}{2} \1 & \frac{1}{2}J-2I+\dfrac{2}{n-1}V
\end{array}\right].\]  
We write $\L D$ as
\[\left[\begin{array}{cccc}
\dfrac{5-n}{2}-2 & \dfrac{5-n}{2} \1'  \\
        \\
        \dfrac{1}{2} \1 & \frac{1}{2}\1\1'-2I 
\end{array}\right] + \dfrac{2}{n-1} \left[{\begin{array}{cc}
        0 & \0  \\
        & \\
        \0 & V
    \end{array}}\right].\]  
Thus,
\[\L D= \left[{\begin{array}{cc}
        \dfrac{5-n}{2} & \dfrac{5-n}{2} \1'  \\
        & \\
        \dfrac{1}{2} \1 & \dfrac{1}{2} \1 \1'
    \end{array}}\right] - 2I  + \dfrac{2}{n-1} \left[{\begin{array}{cc}
        0 & \0  \\
        & \\
        \0 & V
    \end{array}}\right].\]  
By an easy verification,
\[2w\1_n'=\left[{\begin{array}{cc}
        \dfrac{5-n}{2} & \dfrac{5-n}{2} \1'  \\
        & \\
        \dfrac{1}{2} \1 & \dfrac{1}{2} \1 \1'
    \end{array}}\right] . \]
Thus,
\begin{equation} \label{ld+2e}
\L D +2 I=2 w \1_n' + \frac{2}{n-1} \left[{\begin{array}{cc}
        0 & \0  \\
        \0 & V
    \end{array}}\right] . 
    \end{equation}
As, \begin{equation*}
    D = \left[ \begin{array}{cc}
        0 & \1' \\
        \1 & \widetilde{D}
    \end{array}\right],
\end{equation*}
where $\widetilde{D}=\cir(u)$, by (\ref{fd}) we deduce,
\begin{equation} \label{dw}
D w=\frac{1}{4} (n-1) \1_n. 
\end{equation}
Define 
\[K:=-\frac{1}{2} \L +\frac{4}{n-1}ww'. \]
To complete the proof, we show that $KD$ is symmetric, $DKD=D$ and $KDK=K$.
We first compute $KD$.
By (\ref{ld+2e}) and (\ref{dw}),
\[\frac{4}{n-1}ww'D=w\1_n', \]
\[-\frac{1}{2} \L D=-w \1_n' - \frac{1}{n-1} \left[{\begin{array}{cc}
        0 & \0  \\
        \0 & V
    \end{array}}\right] + I.\]
Adding the above two equations, we get 
\begin{equation} \label{ad}
KD=I-\frac{1}{n-1} \left[{\begin{array}{cc}
        0 & \0  \\
        \0 & V
    \end{array}}\right] .  
    \end{equation}
So, $KD$ is symmetric.

Before proceeding further, we note that, since $n$ is odd, $uv'=0$ and $\1'v'=0$. Since $\D=\cir(u)$ and $V=\cir(v)$,
$\D V=\cir(uV)$. So, $\D V=O$.

By $(\ref{ad})$, it follows that
 \begin{equation*}\label{dad}
        \begin{aligned}
            DKD &=  D-\frac{1}{n-1}\left[ \begin{array}{cc}
        0 & \1'V \\
        \0 & \widetilde{D}V
    \end{array}\right] \\  &=  D-\frac{1}{n-1}\left[ \begin{array}{cc}
        0 & \0 \\
        \0 & \widetilde{D}V
    \end{array}\right] \\
    &=D.
        \end{aligned}
         \end{equation*}
We now compute $KDK$.  From ($\ref{vCk}$), we recall the following observation for the special matrices $C_1,\dotsc,C_m$ for $W_n$:
\begin{equation} \label{vCkcopy}
vC_k=\begin{cases}
        (-1)^{k} 2 v &~\mbox{if} ~k=1,\dotsc,m-1 \\
       (-1)^{m}v &~\mbox{if}~k=m.
        \end{cases}
   \end{equation}
We claim that  \begin{equation}\label{vl}
    \left[{\begin{array}{cc}
        0 & \0  \\
        \0 & V
    \end{array}}\right]\L=O.
    \end{equation}
By a direct computation, we have    
    \begin{equation*}
    \begin{aligned}
       \left[{\begin{array}{cc}
        0 & \0  \\
        \0 & V
    \end{array}}\right]\L &= \left[{\begin{array}{cc}
        0 & \0  \\
        \0 & N
    \end{array}}\right],
    \end{aligned}
\end{equation*}
where \[N:=\frac{n(n-2)}{6(n-1)}V
    + \sum_{k=1}^{m} \alpha_kVC_k.\]    
    Since $V = \cir(v)$ and $VC_k = \cir(v C_k)$, 
\[N=\cir(\frac{n(n-2)}{6(n-1)}v
    + \sum_{k=1}^{m} \alpha_k vC_k).\]
Using (\ref{vCkcopy}),
\begin{equation*}
\begin{aligned}
\sum_{k=1}^{m} \al_k v C_k &= 2 \sum_{k=1}^{m-1} (-1)^{k} \al_k v + (-1)^{m} \al_m v \\
&=(2 \sum_{k=1}^{m} (-1)^{k} \al_k - (-1)^{m} \al_m) v .\\
\end{aligned}
\end{equation*}    
In view of $\KKK$, 
\[\sum_{k=1}^{m} \al_k v C_k= \frac{2n-n^2}{6(n-1)}v. \]    
    This implies $N=O$ and the claim is proved.
    
    As $\1'v'=0$, we have $V \1=\0$. So, 
\begin{equation} \label{zvw}    
    \left[{\begin{array}{cc}
        0 & \0  \\
        \0 & V
    \end{array}}\right] w=\0.
    \end{equation}
    From equation (\ref{ad}), we obtain
    \[KDK= K+ \frac{1}{2(n-1)}\left[{\begin{array}{cc}
        0 & \0  \\
        \0 & V
    \end{array}}\right] \L-\frac{4}{(n-1)^2} \left[{\begin{array}{cc}
        0 & \0  \\
        \0 & V
    \end{array}}\right] ww'.  \]
By (\ref{vl}) and (\ref{zvw}), we get $KDK=K$.
    The proof is complete.
    \end{proof}

\section{Properties of the special Laplacian matrix}
In this section, we obtain certain properties of the special Laplacian matrix. In order to do this, we need a preliminary result. Define $p:=(p_1,\dotsc,p_{n-2})$ and $q:=(q_1,\dotsc,q_{n-2})$ by 
\[ 
p_k:=\begin{cases}
        -1 &~\mbox{if} ~k=1\\
       -2 &~\mbox{if} ~k~\mbox{is even} \\
       ~~0 & \mbox{else};
        \end{cases}
   \]
   
   \[
q_k:=\begin{cases}
        -1 &~\mbox{if} ~k=1\\
       ~~0 &~\mbox{if} ~k~\mbox{is even} \\
       -2 & \mbox{else}.
        \end{cases}
   \]
Define an $n\times (n-2)$ matrix by
\[C:=\left[{\begin{array}{ccccccccccccccccc}
       2 I_{n-2} \\
       p \\
       q \\
    \end{array}}\right]. \]
We shall find a matrix 
$X$ such that $\L D X=C$. Define a vector $y:=(y_1,\dotsc,y_{n-3})$ by
\[y_k:=\begin{cases}
        -2 &~\mbox{if} ~k=1\\
       ~~0 &~\mbox{if} ~k~\mbox{is even} \\
       -1 & \mbox{else};
        \end{cases}\]
        and let $Y:=\cir(y)$.

\begin{lemma}\label{L,LDU}
 If \[X:= \frac{1}{2}\left[\begin{array}{cc}
       ~~~{n-7} & ~~~~({n-5})\1_{n-3}' \\
     -\1_{n-3}& 2Y\\
     \0 & O
\end{array}\right],\]
then $\L D X =C$.
\end{lemma}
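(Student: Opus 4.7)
The approach is a direct block-matrix computation. The form of $\L D$ obtained in Lemmas~\ref{M} and~\ref{msimple} makes $\L D\,X$ accessible block by block, and the verification reduces to checking each block of the product against the corresponding block of $C$.

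Specifically, I would write
\[\L D = \left[\begin{array}{cc} \dfrac{1-n}{2} & \dfrac{5-n}{2}\1' \\ \dfrac{1}{2}\1 & M \end{array}\right], \qquad M = \tfrac{1}{2}J - 2I + \tfrac{2}{n-1}V,\]
and partition $X$ conformally as $X = \left[\begin{array}{c} x_1' \\ X_2 \end{array}\right]$, where $x_1' = \tfrac{1}{2}[\,n-7,\ (n-5)\1_{n-3}'\,]$ and $X_2$ is the bottom $(n-1)\times(n-2)$ block of $X$. By the structure of $X$, the last two rows of $X_2$ vanish and its top $(n-3)\times(n-2)$ part equals $\tfrac{1}{2}[-\1_{n-3},\ 2Y]$. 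Block multiplication then gives
\[\L D\,X = \left[\begin{array}{c} \dfrac{1-n}{2}\,x_1' + \dfrac{5-n}{2}\1' X_2 \\ \dfrac{1}{2}\1\, x_1' + M X_2 \end{array}\right],\]
and my task is to show that this matches $C = \left[\begin{array}{c} 2I_{n-2} \\ p \\ q \end{array}\right]$ block by block. The top row will reduce to $(2,\,\0')$ via the column sums of $X_2$ — the first column contributes $-\tfrac{n-3}{2}$, and each remaining column is a shift of $y$ whose sum is read off from $Y = \cir(y)$. For the lower block I would decompose $M X_2 = \tfrac{1}{2} J X_2 - 2X_2 + \tfrac{2}{n-1}\,V X_2$: the $J$-piece replicates the column sums already computed, $-2X_2$ is immediate, and $V X_2$ is the heart of the calculation. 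Since $V = \cir(v)$ and the circulant part of $X_2$ is $Y = \cir(y)$, most rows of $V X_2$ read off cleanly as cyclic shifts of $vY$.

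The main obstacle will be the last two rows. For rows $2,\ldots,n-2$ of $\L D\,X$, the cyclic shifts of $v$ acting on the $Y$-block produce a clean diagonal pattern that, after combining with $-2X_2$ and $\tfrac{1}{2}J X_2 + \tfrac{1}{2}\1\, x_1'$, collapses to $[0 \mid 2I_{n-3}]$, matching rows $2,\ldots,n-2$ of $C$. For the last two rows, however, the cyclic shifts of $v$ wrap past the zero rows of $X_2$, so the truncated inner products are no longer a circulant shift of $vY$. Matching these truncated rows to the asymmetric, parity-sensitive vectors $p$ and $q$ — where $p$ carries $-2$'s in the even-indexed positions and $q$ carries $-2$'s in the odd-indexed positions (both sharing $-1$ at $k=1$) — is the delicate step, and is precisely what forces the definitions of $y$, $p$ and $q$ as given. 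Once this parity bookkeeping is carried out, all blocks agree with $C$.
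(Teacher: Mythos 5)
Your proposal is correct and follows essentially the same route as the paper: the paper's starting identity $\L D+2I=2w\1_n'+\tfrac{2}{n-1}\bigl[\begin{smallmatrix}0&\0\\ \0&V\end{smallmatrix}\bigr]$ is just your block form $M=\tfrac12 J-2I+\tfrac{2}{n-1}V$ repackaged, and both arguments then rest on the column-sum computation $\1_n'X=-2\1_{n-2}'$ (via $\1_{n-3}'Y=\tfrac{1-n}{2}\1_{n-3}'$) together with the action of the alternating vector on $Y$. The one simplification you are missing is that the ``wrap-around'' you flag for the last two rows is harmless: since $v$ alternates in sign and has even length $n-1$, every row of $V=\cir(v)$ is $\pm v$, so the bottom two rows are handled by the same identity $\v Y=\tfrac{1-n}{2}\v$ (the paper's $R'Y=\tfrac{1-n}{2}R'$) rather than requiring separate parity bookkeeping.
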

\begin{proof}
From (\ref{ld+2e}), we have 
\[\L D+2I = 2w\1_n'+\frac{2}{n-1}\left[\begin{array}{cccc}
0 & \0 \\
\0 & V
\end{array}\right]\] 
So,
\begin{equation}\label{LDU}
    \begin{aligned}
        \L D X = 2w\1_n' X-2X+\frac{2}{n-1}\left[\begin{array}{cccc}
0 & \0 \\
\0 & V
\end{array}\right] X.
    \end{aligned}
\end{equation}
By an easy computation, 
\[\1_{n-3}'y = -2-\frac{n-5}{2} = \frac{1-n}{2},\]
and therefore, 
\[\1'_{n-3} Y= \frac{1-n}{2}\1'_{n-3}. \]
Hence
\begin{equation}\label{colsumU}
\begin{aligned}
    \1' X&= \frac{1}{2}\left[\begin{array}{cc}
       {n-7}+(n-3)(-1)  &  ({n-5})\1_{n-3}'+  2 \1_{n-3}' Y
    \end{array}\right] \\
    &= \frac{1}{2}\left[\begin{array}{cc}
       -4  &  ({n-5})\1_{n-3}'+  (1-n)\1_{n-3}'  
    \end{array}\right] \\&= -2\1_{n-2}'.
\end{aligned}
\end{equation}
Recall that $V = \cir(v)$, where $v =(1,-1,1,-1,\dotsc,-1)$ is a row vector with ${n-1}$ components. Define a row vector $\v$ with $n-3$ components by
\[\v:= (1,-1,1,-1,\dotsc,-1).\] 
Let \[R:=[\v',-\v']~\mbox{and}~Q:=\left[\begin{array}{cc}
    ~~1 & -1 \\
    -1 & ~~1
\end{array}\right].\]  Then $V$ can be written
\[V = \left[\begin{array}{cc}
     \cir(\v)& R \\
    R' & Q
\end{array}\right].\]
Therefore
\begin{equation}\label{VU}
    \begin{aligned}
        \left[\begin{array}{cccc}
0 & \0 \\
\0 & V
\end{array}\right] X &=  \frac{1}{2}\left[\begin{array}{ccc}
    0 & ~\0 & ~\0\\
\0& \cir(\v)& R \\
     \0 &     R' & Q
\end{array}\right]  \left[\begin{array}{cc}
    ~~~{n-7} & ~~~~({n-5})\1_{n-3}' \\
     -\1_{n-3}& 2Y\\
     \0 & O
\end{array}\right],\\
&= \frac{1}{2} \left[\begin{array}{ccc}
    0 & ~~\0 \\
     \0& ~2\cir(\v Y) \\
      \0 & ~~~2R'Y
\end{array}\right].
    \end{aligned}
\end{equation}
By a direct verification, we see that
\[\v Y = \frac{1-n}{2}\v.\]
This gives 
\[R'Y = \frac{1-n}{2}R'\]
From (\ref{VU}), we have
\begin{equation}\label{VU1}
    \begin{aligned}
        \left[\begin{array}{cccc}
0 & \0\\
\0 & V
\end{array}\right]X=  \frac{1-n}{2}\left[\begin{array}{ccc}
    0 & ~\0 \\
     \0& \cir(\v) \\
      \0 & R'
\end{array}\right].
    \end{aligned}
\end{equation}
From (\ref{LDU}), (\ref{colsumU}) and (\ref{VU1}), we have
\begin{equation*}
    \begin{aligned}
        \L D X &= -4w\1_{n-2}'-2X-\left[\begin{array}{ccc}
    0 & ~\0 \\
     \0& \cir(\v) \\
      \0 & R'
\end{array}\right] \\
&= -4w\1_{n-2}'-\left[\begin{array}{cc}
    ~~~{n-7} & ~~~~({n-5})\1_{n-3}' \\
     -\1_{n-3}& 2Y\\
     \0 & O
\end{array}\right]
-\left[\begin{array}{ccc}
    0 & ~\0 \\
     \0& \cir(\v) \\
      \0 & R'
\end{array}\right] \\
&= -\left[\begin{array}{cc}
    ~~~~{5-n} & ~~~~~({5-n})\1_{n-3}' \\
    ~~~ \1_{n-3}& J_{n-3}\\
     \1_2 & \1_2\1_{n-3}'
\end{array}\right]-\left[\begin{array}{cc}
    ~~~~{n-7} & ({n-5})\1_{n-3}' \\
     -\1_{n-3}& ~~~2Y+\cir(\v)\\
     \0 & R'
\end{array}\right].
\end{aligned}
\end{equation*}
Thus,
\begin{equation}\label{E,LDU}
    \begin{aligned}
        \L D X &=  -\left[\begin{array}{cc}
    -2 & \0 \\
     ~~\0& ~~~~~~~~~~J_{n-3}+2Y+\cir(\v)\\
     ~~~\1_2 & \1_2\1_{n-3}'+R'
\end{array}\right]\\
&= -\left[\begin{array}{cc}
    -2 & \0 \\
     ~~\0& ~~~~~~~~~~~\cir(\1_{n-3}'+2y'+\v)\\
     ~~~\1_2 & \1_2\1_{n-3}'+R'
\end{array}\right].
\end{aligned}
\end{equation}
We note that
\begin{equation}\label{x}
    \begin{aligned}
        \1_{n-3}'+2y'+\v &= (2,0,2,0,\dotsc,2,0)+2y'\\
        &= -2(1,0,\dotsc,0)'
    \end{aligned}
\end{equation}
and 
\begin{equation}\label{y}
    \begin{aligned}
         \1_2\1_{n-3}'+R'= 2\left[\begin{array}{ccccccc}
             1 & 0 &1 &0 & \dotsc &1 &0 \\
             0 &1 &0 & 1&\dotsc  &0 &1
         \end{array}\right]
    \end{aligned}
\end{equation}
From (\ref{E,LDU}), (\ref{x}) and (\ref{y}), we get
\[\L D X =C.\]
The proof is complete.
\end{proof} 

We conclude the paper with the following theorem.
\begin{theorem}
The special Laplacian matrix $\L$ has the following properties.
\begin{enumerate}
\item[{\rm (i)}] $\L \1_n=\0$. 
\item[{\rm (ii)}] $\rank(\L) = n-2$.
\item[{\rm (iii)}] If $\L^{\dag}=[\theta_{ij}]$, then
 \(d_{ij}=\theta_{ii} + \theta_{jj}-2 \theta_{ij}. \)
  \item[{\rm (iv)}] $\L$ is positive semidefinite.
\end{enumerate}
\end{theorem}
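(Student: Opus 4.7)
For (i), I plan to verify $\L\1_n=\0$ by a direct row-sum calculation from Definition~\ref{lap}. The first row sums to $\tfrac{n-1}{2}-\tfrac{n-1}{2}=0$. For any subsequent row, since each $C_k$ has row sum $2$ for $k<m$ and $1$ for $k=m$, the total row sum reduces to $\tfrac{n(n-2)}{6(n-1)}-\tfrac{1}{2}+2\sum_{k=1}^{m}\al_k-\al_m$, which by identity $\K$ together with $m=(n-1)/2$ collapses to $0$.

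For (ii), I will establish matching inequalities. Lemma~\ref{L,LDU} already gives $\L D X=C$; since the top block of $C$ is $2I_{n-2}$, one has $\rank(C)=n-2$, hence $\rank(\L)\geq n-2$. For the reverse direction I exhibit two independent vectors in $\ker\L$. First, $\1_n\in\ker\L$ by (i). Second, the vector $d=(0,1,-1,1,\dots,-1)'$ satisfies $Dd=\0$, so $D^{\dag}d=\0$, and a direct check gives $w'd=\tfrac{1}{4}\sum_{i=1}^{n-1}v_i=0$; plugging into $\L=-2D^{\dag}+\tfrac{8}{n-1}ww'$ yields $\L d=\0$. Since $d$ has first coordinate $0$ but $\1_n$ does not, the two kernel vectors are independent, so $\rank(\L)\leq n-2$.

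The heart of (iii) is the matrix identity $\L D\L=-2\L$. Expanding $\L D$ from $\L=-2D^{\dag}+\tfrac{8}{n-1}ww'$, using $D^{\dag}D=KD$ from (\ref{ad}) and $Dw=\tfrac{n-1}{4}\1_n$ from (\ref{dw}), I obtain
\[
\L D=-2I+\tfrac{2}{n-1}\left[\begin{array}{cc}0 & \0' \\ \0 & V\end{array}\right]+2w\1_n'.
\]
Right-multiplying by $\L$, the last two terms vanish by (\ref{vl}) and by part (i), leaving $\L D\L=-2\L$. Set $\Theta:=\L^{\dag}$ and $F:=D+2\Theta$; then $\L F\L=O$. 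Since $\1_n'd=0$ (alternating sum over the even length $n-1$) and $\|d\|^{2}=n-1$, the orthogonal projector onto $\ker\L$ is $U:=\tfrac{1}{n}\1_n\1_n'+\tfrac{1}{n-1}dd'$. Writing $I=U+P_R$ with $P_R:=\L^{\dag}\L$, the identity $\L F\L=O$ is equivalent to $P_RFP_R=O$, which gives $F=UF+FU-UFU$. Using $\Theta\1_n=\Theta d=\0$ and $Dd=\0$, I compute $d'F=\0'$, $\1_n'F=(D\1_n)'$, $d'Fd=0$, and $\1_n'Fd=0$; the $dd'$ and $\1 d'$ pieces therefore drop out, and $F$ collapses to $\1_n\psi'+\psi\1_n'$ for a suitable $\psi\in\R$. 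Reading the diagonal of $F=D+2\Theta$ then forces $0=d_{ii}=2\psi_i-2\theta_{ii}$, so $\psi_i=\theta_{ii}$, which proves (iii).

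Finally, (iv) is immediate once (iii) is in hand: for any $x\in\R$,
\[
x'\L x=-\tfrac{1}{2}x'\L D\L x=-\tfrac{1}{2}(\L x)'D(\L x),
\]
and $\1_n'(\L x)=0$ by (i). Since $D$ is an Euclidean distance matrix (recorded in Section~2 after (\ref{fd})), one has $y'Dy\leq 0$ whenever $\1_n'y=0$, so $x'\L x\geq 0$. The main obstacle I anticipate is the bookkeeping inside (iii): making sure every $d$-contribution in the expansion $F=UF+FU-UFU$ actually vanishes, which rests on the orthogonality $\1_n\perp d$ and the fact that both $\1_n$ and $d$ lie in $\ker\L=\ker\Theta$. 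Everything else reduces either to row-sum arithmetic via $\K$ or to short consequences of identities already assembled in the proof of Theorem~\ref{maint}.
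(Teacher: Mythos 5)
Your proposal is correct, and for the upper bound in (ii), for (iii) and for (iv) it follows a genuinely different route from the paper, while (i) and the lower bound in (ii) coincide with the paper's argument (row sums via $\K$, and $\L D X = C$ from Lemma \ref{L,LDU}). For the upper bound in (ii) the paper does not exhibit kernel vectors: it observes that all cofactors of $\L$ are equal, applies the matrix determinant lemma to $D^{\dag} = -\frac{1}{2}\L + \frac{4}{n-1}ww'$, and uses the singularity of $D^{\dag}$ to force the common cofactor to vanish; your alternative --- checking that both $\1_n$ and $d=(0,1,-1,\dotsc,-1)'$ lie in $\ker\L$ via $\L = -2D^{\dag} + \frac{8}{n-1}ww'$, $D^{\dag}d=\0$ and $w'd=0$ --- is simpler and, usefully, identifies $\ker\L$ explicitly, which you then exploit in (iii). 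For (iii) the paper instead cites Theorem 3.1 of \cite{BALAJIedm}: since $\1_n'D^{\dag}\1_n = \frac{4}{n-1}>0$, that theorem gives $D^{\dag} = -\frac{1}{2}G^{\dag} + \frac{4}{n-1}ww'$ with $G=-\frac{1}{2}PDP$, $P=I-\frac{1}{n}J$, and comparison with the inverse formula yields $\L^{\dag}=-\frac{1}{2}PDP$, from which the identity $d_{ij}=\theta_{ii}+\theta_{jj}-2\theta_{ij}$ and, via Schoenberg, (iv) follow at once. Your self-contained substitute --- deriving $\L D\L = -2\L$ from (\ref{ld+2e}), (\ref{vl}) and (i), then expanding $F=D+2\L^{\dag}$ against the projector $U=\frac{1}{n}\1_n\1_n'+\frac{1}{n-1}dd'$ onto $\ker\L$ to obtain $F=\1_n\psi'+\psi\1_n'$ with $\psi_i=\theta_{ii}$ --- checks out: indeed $d'F=\0'$, $F d=\0$ and $d'D\1_n=0$, so every term involving $d$ drops from $F=UF+FU-UFU$; and your (iv), namely $x'\L x=-\frac{1}{2}(\L x)'D(\L x)\geq 0$ because $\L x\perp\1_n$ and $D$ is a Euclidean distance matrix, uses the same Schoenberg fact in its quadratic-form guise. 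What the paper's route buys is brevity and the explicit closed form $\L^{\dag}=-\frac{1}{2}PDP$; what yours buys is independence from the external reference and an explicit description of $\ker\L=\mathrm{span}\{\1_n,d\}$. In a final write-up, do spell out the routine facts used implicitly: $\ker D^{\dag}=\ker D$ and $\ker\L^{\dag}=\ker\L$ (by symmetry), that $\dim\ker\L=2$ from (ii) is what makes $U$ the projector, and the off-diagonal read-off $d_{ij}+2\theta_{ij}=\psi_i+\psi_j=\theta_{ii}+\theta_{jj}$ that finishes (iii).
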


\begin{proof}
Using Definition \ref{lap}, we have\begin{equation*}
\begin{aligned}
    \L\1_n &= \left[{\begin{array}{cc}
        \frac{n-1}{2}\\   \0 
    \end{array}}\right] + \frac{n(n-2)}{6(n-1)}\left[{\begin{array}{cc}
        0 \\ \1
    \end{array}}\right] - \frac{1}{2} \left[{\begin{array}{cc}
        n-1 \\
        \1 
    \end{array}}\right] + \sum_{k=1}^{m} \al_k  \left[{\begin{array}{cc}
        0   \\
        C_k\1
    \end{array}}\right] \\ &= \left[{\begin{array}{cc}
        0\\   -\frac{1}{2}\1+B 
    \end{array}}\right]
\end{aligned}
\end{equation*}
where $B =\frac{n(n-2)}{6(n-1)}\1+\sum_{k=1}^{m}  \al_k C_k \1$. From Lemma \ref{M}, $B = \frac{1}{2}\1$. Hence $\L\1_n = \0$. The proof of (i) is complete.

We will now prove (ii). Since $\L$ is symmetric and $\L \1_n=\0$, all cofactors of $\L$ are equal. Let the common cofactor of $\L$ be $\delta$.
By Theorem $\ref{maint}$, 
\[D^{\dag}=-\frac{1}{2} \L +\frac{4}{n-1}ww'.\] 
Using matrix determinant lemma,
\begin{equation*}
\begin{aligned}
\det( D^{\dag})&= \det (-\frac{1}{2} \L) + \frac{4}{n-1} w' \mbox{adj} (-\frac{1}{2} \L)w \\
&=(-1)^{n-1}\frac{4}{n-1}  \frac{1}{2^{n-1}} \delta.
\end{aligned}
\end{equation*}
Hence $\delta=0$. So, $\rank(\L)\leq n-2.$
In view of Lemma \ref{L,LDU}, $\rank(\L) \geq n-2$.
Thus,
$\rank(\L) = n-2$. This proves (ii).

To prove (iii), we first note that 
\begin{equation} \label{dwlast}
Dw=\frac{n-1}{4} \1_n,~~D^{\dag} \1_n=\frac{4}{n-1}w~~\mbox{and}~~ \1_n'D^\dag \1_n=\frac{4}{n-1}.
\end{equation}
 Define $$P:= I-\frac{1}{n}J~~\mbox{and}~~G:=-\frac{1}{2} PDP.$$ As $\1_n' D^{\dag}\1_n>0$, by Theorem 3.1 in \cite{BALAJIedm}, we have
\begin{equation} \label{ddag}
    D^{\dag}=-\frac{1}{2} G^{\dag} +\frac{1}{\1_n'D^{\dag}\1_n}(D^{\dag}\1_n)(\1_n'D^{\dag}).
\end{equation}
From (\ref{dwlast}) and (\ref{ddag}), we get
\begin{equation} \label{mpinvformula}
D^{\dag}=-\frac{1}{2} G^{\dag} +\frac{4}{n-1}ww'.
\end{equation}
By our inverse formula,
\begin{equation}\label{mpinvformulacopy}
    D^{\dag}=-\frac{1}{2} \L +\frac{4}{n-1}ww'
\end{equation}
Equations (\ref{mpinvformula}) and (\ref{mpinvformulacopy}) imply
\[G^{\dag} = \L.\]
This gives
\begin{equation} \label{glast}
\L^\dag=G=-\frac{1}{2} PDP. 
\end{equation}
By putting $\L^\dag=[\theta_{ij}]$, we see that equation $(\ref{glast})$ gives
\[d_{ij}=\theta_{ii} + \theta_{jj}-2 \theta_{ij}. \]
This proves (iii).

Since $D$ is a Euclidean distance matrix, by a well-known theorem of Schoenberg, $G$ is a positive semidefinite matrix.
Hence, $\L$ is positive semidefinite. This proves (iv).
The proof is complete.
\end{proof}

\bibliography{mybibfile}

\begin{thebibliography}{1}
\expandafter\ifx\csname url\endcsname\relax
  \def\url#1{\texttt{#1}}\fi
\expandafter\ifx\csname urlprefix\endcsname\relax\def\urlprefix{URL }\fi
\expandafter\ifx\csname href\endcsname\relax
  \def\href#1#2{#2} \def\path#1{#1}\fi

\bibitem{must}
M.~Aouchiche, P.~Hansen, Distance spectra of graphs: A survey, Linear algebra
  and its applications 458 (2014) 301--386.
\newblock \href {http://dx.doi.org/10.1016/j.laa.2014.06.010}
  {\path{doi:10.1016/j.laa.2014.06.010}}.

\bibitem{bapat}
R.~B. Bapat, Graphs and matrices, 2nd Edition, Hindustan Book Agency, New
  Delhi, 2018.

\bibitem{fiedler_2011}
M.~Fiedler, Matrices and Graphs in Geometry, Encyclopedia of Mathematics and
  its Applications, Cambridge University Press, 2011.
\newblock \href {http://dx.doi.org/10.1017/CBO9780511973611}
  {\path{doi:10.1017/CBO9780511973611}}.

\bibitem{Graham}
R.~Graham, L.~Lovász, Distance matrix polynomials of trees, Advances in
  Mathematics 29~(1) (1978) 60--88.
\newblock \href {http://dx.doi.org/10.1016/0001-8708(78)90005-1}
  {\path{doi:10.1016/0001-8708(78)90005-1}}.

\bibitem{Balaji}
R.~Balaji, R.~B. Bapat, S.~Goel, An inverse formula for the distance matrix of
  a wheel graph with even number of vertices.
\newblock \href {http://arxiv.org/abs/2006.02841} {\path{arXiv:2006.02841}}.

\bibitem{bapat_kirk}
R.~Bapat, S.~Kirkland, M.~Neumann, On distance matrices and laplacians, Linear
  Algebra and its Applications 401 (2005) 193--209.
\newblock \href {http://dx.doi.org/10.1016/j.laa.2004.05.011}
  {\path{doi:10.1016/j.laa.2004.05.011}}.

\bibitem{sivasu}
R.~Bapat, S.~Sivasubramanian, Inverse of the distance matrix of a block graph,
  Linear and Multilinear Algebra 59~(12) (2011) 1393--1397.
\newblock \href {http://dx.doi.org/10.1080/03081087.2011.557374}
  {\path{doi:10.1080/03081087.2011.557374}}.

\bibitem{Jakli}
G.~Jaklic, J.~Modic, Euclidean graph distance matrices of generalizations of
  the star graph, Applied Mathematics and Computation 230 (2014) 650--663.
\newblock \href {http://dx.doi.org/10.1016/j.amc.2013.12.158}
  {\path{doi:10.1016/j.amc.2013.12.158}}.

\bibitem{BALAJIedm}
R.~Balaji, R.~Bapat, On euclidean distance matrices, Linear Algebra and its
  Applications 424~(1) (2007) 108 -- 117.
\newblock \href {http://dx.doi.org/https://doi.org/10.1016/j.laa.2006.05.013}
  {\path{doi:https://doi.org/10.1016/j.laa.2006.05.013}}.

\end{thebibliography}
\end{document}